\definecolor{myurlcolor}{rgb}{0,0,0.7}
\newtheorem{rmq}{Remark}[section]
\newtheorem{dfn}{Definition}[section]
\newtheorem{lem}{Lemma}[section]
\newtheorem{thm}{Theorem}[section]
\newtheorem{com}{Commentary}[section]
\newtheorem{Ass}{Assumption}[section]
\newcommand{\bprof}{\begin{prof}}
\newcommand{\eprof}{\end{prof}}
\newenvironment{prof}[1][Proof]{\textbf{#1.} }{\ \rule{0.5em}{0.5em}}
\newcommand{\bea}{\begin{eqnarray}}
\newcommand{\eea}{\end{eqnarray}}
\newcommand{\beq}{\begin{equation}}
\newcommand{\eeq}{\end{equation}}
\newcommand{\enn}{\nonumber \end{equation}}
\newcommand{\beqs}{\begin{eqnarray*}}
\newcommand{\eeqs}{\end{eqnarray*}}
 \newcommand{\cE}{\mathcal{E}}
\newcommand{\cT}{\mathcal{T}}
\def\cN{{\mathcal N}}
\newcommand{\dive}{\mathop{\rm div}\nolimits}
\def\Om{\Omega}
\title[An a posteriori error analysis]
{An a posteriori error analysis for a coupled continuum pipe-flow/Darcy model in Karst aquifers: anisotropic and isotropic discretizations}
\author{Koffi Wilfrid Houedanou$^{(a,b)}$}
\email{a) khouedanou@yahoo.fr}
\address{Universit\'e d'Abomey-Calavi (UAC), Rep. du B\'enin.}
\email{b) houedanou@aims.ac.za }
\address{African Institute for Mathematical Sciences (AIMS) South Africa.}
\begin{document}

\maketitle
\begin{Large}
%\twocolumn[10] 
\begin{abstract}\normalsize
This paper presents an a posteriori error analysis for a coupled continuum pipe-flow/Darcy model in karst aquifers. 
We consider a unified anisotropic finite element discretization (i.e. elements with very large aspect ratio).
Our analysis covers two-dimensional domains, conforming and nonconforming discretizations as well as different elements.
Many examples of finite elements that are covered by analysis are presented.
%We consider the appropriate anisotropic finite 
%element approximation studied by Wei Lui et al. in \cite{WZJ:2014}. 
From the finite element solution, the error estimators are constructed and based 
on the residual of model equations.
Lower and upper error bounds form the main result with minimal assumptions on the elements. The lower error bound is uniform with respect to 
the mesh anisotropy in the entire domain. The upper error bound depends on a proper alignment of the anisotropy of the mesh which is a common feature 
of anisotropic error estimation. 
In the special case of isotropic meshes, the results simplify, and upper and lower error bounds hold unconditionally.\\
%It is proven that the a posteriori error estimate provided in this paper is both reliable and efficient.\\
{\bf Mathematics Subject Classification [MSC]:} 74S05, 74S10, 74S15,
74S20, 74S25, 74S30.\\
\textbf{Keywords : } Karst aquifers; Anisotropic meshes; Error estimator.
\end{abstract}

%\twocolumn
\tableofcontents
\section{Introduction}
A coupled continuum pipe-flow (CCPF) model has been developed for groundwater flow and solute
transport in a karst aquifer with conduits. Groundwater flow in conduits is simulated through pipe-flow model and
flow in fissured matrix rock is described by Darcy's law. Water
mass exchange between the two domains is modeled by a first-order exchange rate method.
Karst aquifers are very vulnerable sources of groundwater which are largely used as drinkable
and industrial water, especially as the aquifers are now being seriously threatened by increasing
contamination. Therefore, it is practical for us to study the complicated systems like Karst aquifers
for assessing groundwater risk and controlling groundwater pollution. One of the most popular
models is so called coupled continuum pipe-flow/Darcy (CCPF) model in which the conduits
embedded in the continuum matrix are simplified into a network of one-dimensional (1D) pipes 
\cite{BLS:2003,BLS:2000,BLST:2003,LHSHCT:2003,W:2010,YGH:2009}.

Generally, the flow in the porous matrix is modeled by a continuum approach using the steady Boussinesq equation \cite{BV:1987}, and the Darcy-Weisbach 
equation \cite{B:1993} is applied to the conduit flow in the tube. The matrix flow and conduit flow are coupled at the intersection by the exchange flux, 
which is determined linearly by the difference of hydraulic heads between the matrix system and the conduit system, see 
\cite{YGH:2009,S:1999,MS:1992,G:1989}.

Near the pipe-flow region (see Fig. \ref{F1} below), the derivative of the analytic solution in porous media on $y$-direction is with a low regularity. It means that the 
solution of Darcy model in the porous media domain varie significantly along the direction parallel to $y$-axis and is smooth along parallel 
to $x$-axis. Then it is better to use the anisotropic mesh with a small mesh size on the $y$-direction near pipe-line and a 
large mesh size elsewhere, which has the advantage of improving the computational accuracy and decreasing the amount of calculation 
comparing with refining grid in all directions. In \cite{WZJ:2014,WQXJ:2013,WZH:2012,XW:00,YGH:2009,LW:2015,HAB:2017}, and in the references therein, we can find a large list of contributions 
devoted to numerically approximate the solution of this interaction problem, including conforming and nonconforming methods.

A posteriori error estimators are computable quantities, expressed in terms of the discrete solution  and of the data that measure the actual discrete
errors without the knowledge of the exact solution. They are essential to design adaptive mesh 
refinement  algorithms  which equi-distribute the computational effort and optimize the approximation efficiency. 
Since the pioneering work of Babu\v{s}ka and Rheinboldt \cite{babuska:78a},   adaptive finite element methods based on 
a posteriori error estimates have been extensively investigated. To our best knowledge, there is no a posteriori error estimation for the 
CCPF/Darcy model valid for anisotropic and isotropic discretizations with finite element methods. Here, we develop such a posteriori error analysis for 
anisotropic finite elements satisfying minimal assumptions. These assumptions may be summarised as follows: the scheme is stable (not 
essential but recommended in numerical applications), the discrete space is large enough to contain the conforming $\mathbb{P}^1$ piecewise 
space and satisfies a Crouzeix-Raviart property (see below for the details). These three properties are satisfied by some standard 
finite elements like the Crouzeix-Raviart element, modified Crouzeix-Raviart elements \cite{creuse:03} and the 
$\mathbb{Q}^k (k\geqslant 2)$ element on some anisotropic meshes.

The paper is organized as follows. Section \ref{section2} introduces the problem and some notation. The discretization (as discrete formulation) 
and the general framework with minimal conditions on the mesh and on the elements are given in Section \ref{section3}. Section \ref{section4} 
is devoted to analytical tools. In Section \ref{Finite element} we present several examples of finite elements that are covered by our analysis.
The actual error bounds are given in Section \ref{Estimatorssection}. For the upper error bound, we additionally distinguish between 
conforming and nonconforming discretization. While all these considerations are made for anisotropic meshes, we simplify the 
results for the case of an isotropic discretization in Section \ref{isotropic} since even in that case we obtain new results. We offer our 
conclusion and the further works in Section \ref{conclusion}.

\section{Preliminaries and Notation}\label{section2} 
\subsection{Model problem}
For simplification, similarly to Cao et al. \cite{YGH:2009}, we suppose the porous matrix domain $\Omega^m=\Omega_{+}^m\cup \Omega_{-}^m$, with 
$\Omega_{-}^m=]0,L[\times ]-H_m,0]$ and $\Omega_{+}^m=]0,L[\times [0,H_m[$; and the conduit pipe 
$\Omega^c=]0,L[\times \{y=0\}$. $2H_m$ is the height of the matrix and $L$ is the horizontal length of the matrix and conduit (see Fig.
\ref{F1}). We set $\Omega=\Omega^m\cup\Omega^c$. For each function $v$ defined in $\Omega$, because its restriction to $\Omega^m$ or 
to $\Omega^c$ could play a different Mathematical roles (for instance their traces on $]0,L[$), we will set 
$v^m=v_{|\Omega^m}$ and $v^c=v_{|\Omega^c}$.
Thus, the two-dimensional (2D) steady-state CCPF/Darcy model in Karst aquifers can be written in the following form:
\begin{eqnarray}\label{model}
 \left\{
\begin{array}{ccccccccccccccc}\label{r}
 &-\dive (\mathbb{K}\nabla u^m)& &=& &-\alpha_{ex}(u^m-u^c)\delta_y+f^m& &\mbox{  in  } \Omega^m&\\
&-\frac{d}{dx}\left(D\frac{d u^c}{dx}\right)&  &=& &\alpha_{ex}\left({u^m}_{|_{y=0}}-u^c\right)+f^c& &\mbox{  in  } \Omega^c,&
\end{array}
\right.
\end{eqnarray}
with the Dirichlet boundary conditions, 
\begin{eqnarray}\label{boundary}
 \left\{
\begin{array}{ccccccccccccccc}\label{r}
 &u^m=g^m& &\mbox{  on  }& &\partial \Omega^m,&\\
&u^c=g^c& &\mbox{  on  }& &\partial \Omega^c,&
\end{array}\normalfont 
\right.
\end{eqnarray}
where $u^m$ and $u^c$ denote the unknown hydraulic heads in the porous matrix $\Omega^
m$ and conduit pipe $\Omega^c$, respectively.
Under the homogeneous isotropic media assumption, the hydraulic conductivity tensor 
$\mathbb{K}$ takes the form $\mathbb{K}=\mathcal{K}\mathbb{I}$.  Here, $\mathcal{K}$ is a constant, 
$\mathcal{K}=\frac{kg}{\mu}$, where $k$ is the constant matrix permeability, $\mu$ the kinematic viscosity of water, and $g$ the gravitational 
acceleration constant. The conductivity 
constant $D$ depends on the width of the conduit $d$, $D=\frac{d^3g}{12\mu}$. $f^m$ and $f^c$ represent the external source or sink terms.
$\delta_y$ is the Dirac delta function concentrated on the straight line $\left\{y=0\right\}.$ 
The nonnegative constant $\alpha_{ex}$ represents the coefficient of flux exchange
at the intersection between the matrix and conduit flow.
Physical experimental results in \cite{CH:2012,CLC:2014,Fei:2009,WKL:2014} show that the CCPF model is valid for
flows in Karst aquifers when a suitable fluid exchange coefficient $\alpha_{ex}$ is taken. We also suppose the homogeneous boundary condition, 
$g^m=0=g^c$, which can be easily extended to a general nonhomogeneous case. The system (\ref{model})-(\ref{boundary}) 
consists of an elliptic equation
governing the Darcy flow in the porous matrix region $\Omega^m$ and an embedded one-dimensional pipe-flow equation in conduit region 
$\Omega^c$.
\begin{figure}[htpb]
%  \centering
\begin{center}
\tikzstyle{grisEncadre}=[thick, dashed, fill=gray!20]
\begin{tikzpicture}[scale=0.85]
color=gray!100;
 \draw (1,1)--(7,1);
 \draw (1,1)--(1,5.5);
 \draw (1,5.5)--(7,5.5);
 \draw (7,1)--(7,5.5);
 \draw [line width=4.pt](1,3)--(7,3);
 \draw [grisEncadre](1,5.5) rectangle (7,3);
  \draw [grisEncadre](1,1) rectangle (7,3);
  \draw [line width=0.2pt][>=latex,->](-8,3)--(-5,3) node [right]{$x$};
 \draw [line width=0.2pt][>=latex,->](-8,3)--(-8,6) node [right]{$y$};
  
 %\draw (4,1.8) node [above]{$\mbox{  $\Omega_2$:  \textbf{Saturated Porous Medium (Darcy's law)} }$};
 %\draw (4,4) node [above]{\textbf{O} O O O O O O O O};
 \draw (4,3.7) node [above]{ O O O O O O O O O O O };
 %\draw (4,3.5) node [above]{O O O O O O O O O O O O O };
 %\draw (4,3.2) node [above]{O  O O O O O O O O O};
 \draw (4,4.2) node [above]{O O O O O O O O O O O  };
 \draw (4,4.5) node [above]{O O O O O O O O O O O };
 \draw (4,4.8) node [above]{O O O O O O O O O O };
 \draw (4,1.3) node [above]{O O O O O O O O O O };
 \draw (4,1.6) node [above]{O O O O O O O O O O};
 \draw (4,1.9) node [above]{O O O O O O O O O O};
\draw (4,2.3) node [above]{O O O O O O O O O O};
\draw (1,5.5) node [left] {$H_m$};
\draw (1,3) node [left] {$O$};
\draw (1,1) node [left] {$-H_m$};
\draw (7,3) node [right] {$L$};

%%%%%%%%%%%%%%%%%%%%%%%%%%%%%%%%%%%%%%%%%%%%%%%%%%%%%%%%%%%%%%%%%%%%%%%%%%%%%%%%%
\draw[>=stealth,->] [line width=0.3pt](6.5,3.3)--(8,3.3) node [right]{$\Omega^m$};
\draw[>=stealth,->] [line width=0.3pt](6.5,1.3)--(8,3.3) node [right]{$\Omega^m$};
%%%%%%%%%%%%%%%%%%%%%%%%%%%%%%%%%%%%%%%%%%%%%%%%%%%%%%%%%%%%%%%%%%%%%%%%%%%%%%%%%%%

%\draw [>=stealth,->] [line width=1pt](6.5,3.3)--(9,3.3) node [right]{$\Omega^m$};
 %\draw [>=stealth,->] [line width=1pt](2,3)--(2,3.5) node [right]{$\textbf{n}_2$};
% \draw [>=stealth,->] [line width=1pt](6,3)--(6,2.5) node [right]{$\textbf{n}_1$};
 %\draw [>=stealth,->] [line width=1pt](4,2.8)--(4.7,2.8) node [below]{$\tau$};
 \draw  (4.5,3.55) node [below]{$\Omega^c$};
 \draw[line width=0.5pt](1,1)--(1,3) node[midway,above,sloped]{$\partial\Omega^m$};
 \draw[line width=0.5pt](1,1)--(7,1) node[midway,below,sloped]{$\partial\Omega^m$};
 \draw[line width=0.5pt](7,1)--(7,3) node[midway,below,right]{$\partial\Omega^m$};
 
 \draw[line width=0.5pt](1,3)--(1,5.5) node[midway,above,sloped]{$\partial\Omega^m$};
 \draw[line width=0.5pt](1,5.5)--(7,5.5) node[midway,above,sloped]{$\partial\Omega^m$};
 \draw[line width=0.5pt](7,3)--(7,5.5) node[midway,below,sloped]{$\partial\Omega^m$};
 \end{tikzpicture}
\end{center}
\caption{\footnotesize{\textbf{Two-dimensional figure of a Karst aquifers.}}}
\label{F1}
\end{figure}
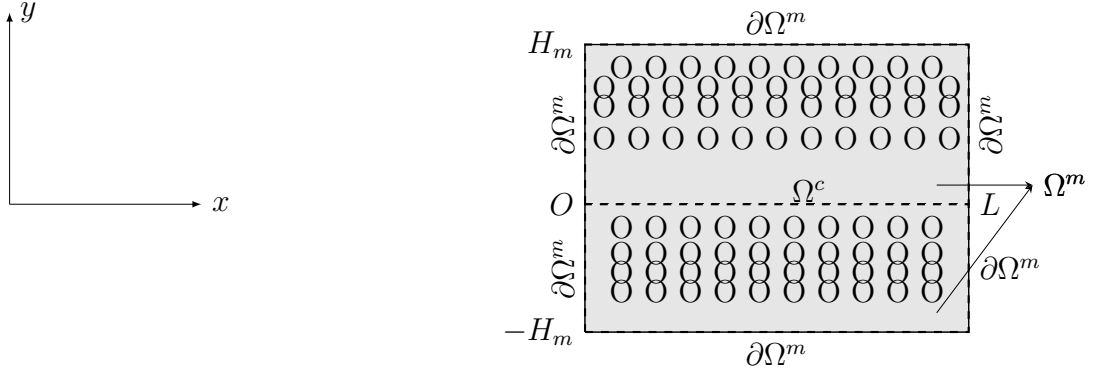

\subsection{Weak formulation}
In this section we introduce a weak formulation for the coupled 
problem given by (\ref{model}) to (\ref{boundary}). 
We begin this subsection by introducing some useful notations. If $W$ is an open bounded domain of $\mathbb{R}^2$ and $r$ 
is a non negative integer, the Sobolev space $H^r(W)=W^{r,2}(W)$ is 
defined in the usual way with the usual norm $\parallel\cdot\parallel_{r,W}$ and semi-norm $|\cdot|_{r,W}$. In particular, 
$H^0(W)=L^2(W)$ and we write $\parallel\cdot\parallel_W$ for $\parallel\cdot\parallel_{0,W}$.
Similarly we   denote by
    $(\cdot,\cdot)_{W}$  the $L^2(W)$ inner product. 
    For shortness if $W$ is equal to $\Omega:=\Omega^m\cup\Omega^c$, we will drop  the index $\Omega$, while  for any $r\geq 0$, 
$\parallel\cdot\parallel_{r,*}=\parallel\cdot\parallel_{r,\Omega_*}$, $|\cdot|_{r,*}=|\cdot|_{r,\Omega_*}$ 
and $(.,.)_*=(\cdot,\cdot)_{\Omega_*}$, for $*\in\{m,c\}$.
The space  $H_0^r(W)$ denotes the closure of $C_0^{\infty}(W)$ in $H^{r}(W)$. 

We define the Hilbert space  
$$V:=\left\{v\in H_0^1(\Omega): v=0 \mbox{  on  } \partial \Omega^c\right\},$$ with the norm 
\begin{eqnarray}
 \lVert v\lVert_{V}:=|v|_{1,m}+\lVert v\lVert_{1,c}, \mbox{   }\forall 
 v\in V.
\end{eqnarray}

Let us further introduce the bilinear form, 
$ a: V\times V\longrightarrow \mathbb{R} $  define for $u,v\in V$ by:

\begin{eqnarray}\label{bilinear}
 a(u,v)&:=& a_m(u,v)+a_c(u,v),
\end{eqnarray}
where,
\begin{eqnarray}
 a_m(u,v):=\int_{\Omega^m}\mathbb{K}\nabla u^m\cdot\nabla v^m(x,y) dx dy,
\end{eqnarray}
and 

\begin{eqnarray}\nonumber
 a_c(u,v)&:=&\int_{0}^L D\frac{du^c}{dx} \cdot\frac{dv^c}{dx} dx\\
 &+&\alpha_{ex} \int_{0}^{L}\left(u^m(x,0)-u^c(x)\right) v^m(x,0)dx\\\nonumber
 &-&\alpha_{ex}\int_{0}^{L}\left(u^m(x,0)-u^c(x)\right) v^c(x)dx.
\end{eqnarray}
In addition, we define the linear form on $V$ by 
\begin{eqnarray}\label{linear}
 F(v)&:=&(f^m,v^m)_m+(f^c,v^c)_c,
\end{eqnarray}
with $v=(v^m,v^c)$ and $f=(f^m,f^c)$.

The weak formulation of the simplified CCPF model (\ref{model})-(\ref{boundary}) can be stated as follows: find
$u\in V$ such that, 
\begin{eqnarray}\label{FF}
 a(u,v)=F(v)\mbox{     } \forall v\in V.
\end{eqnarray}

Indeed, the weak solution $u$ of simplified CCPF model (\ref{model})-(\ref{boundary}) exists and is unique. This is a straight application 
of Lax-Milgram theorem on the fact that the bilinear form $a(u,v)$ on $V\times V$ satisfies the continuity and coercivity conditions.

In Summary the following results hold:
%problem has a unique solution as proved in \cite[Page 88]{WZJ:2014}. 
\begin{thm}
 If $f^*\in L^2(\Omega^*)$ for $*\in\{m,c\}$, then there exists a unique solution $u\in V$ to the 
 problem (\ref{FF}).
\end{thm}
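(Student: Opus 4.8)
The plan is to verify the hypotheses of the Lax--Milgram theorem on the Hilbert space $V$: namely that the bilinear form $a(\cdot,\cdot)$ is bounded and coercive on $V\times V$, and that the linear form $F$ is bounded on $V$. Existence and uniqueness of $u$ then follow at once, so the whole argument reduces to three estimates together with an appeal to Lax--Milgram.

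First I would establish continuity of $a$. The term $a_m$ is controlled directly by Cauchy--Schwarz, $|a_m(u,v)|\leq \mathcal{K}\,|u^m|_{1,m}\,|v^m|_{1,m}$, and the first term of $a_c$ is handled the same way via $|u^c|_{1,c}\,|v^c|_{1,c}$. The two exchange integrals are the only nontrivial part: they involve the trace $u^m(\cdot,0)$ on the interface $\{y=0\}$, which I would bound by the trace theorem, $\|u^m(\cdot,0)\|_{0,c}\leq C_T\|u^m\|_{1,m}$. Since every function in $V$ vanishes on $\partial\Omega^m$, the Poincar\'e inequality lets me replace the full norm $\|u^m\|_{1,m}$ by the seminorm $|u^m|_{1,m}$ appearing in $\|\cdot\|_V$; together with Cauchy--Schwarz on $(0,L)$ this yields $|a(u,v)|\leq M\|u\|_V\|v\|_V$ for a constant $M$ depending on $\mathcal{K}$, $D$, $\alpha_{ex}$, $C_T$ and the Poincar\'e constant.

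The heart of the proof is coercivity, and this is where I expect the only real subtlety. Setting $v=u$, the two diffusive terms give $\mathcal{K}\,|u^m|_{1,m}^2 + D\,|u^c|_{1,c}^2$, which are already nonnegative. The danger is the pair of exchange terms, which are not individually signed; but when $u=v$ they combine into a perfect square,
$$\alpha_{ex}\int_0^L\big(u^m(x,0)-u^c(x)\big)^2\,dx\geq 0,$$
because the $\alpha_{ex}$-contributions telescope. Hence $a(u,u)\geq \mathcal{K}\,|u^m|_{1,m}^2 + D\,|u^c|_{1,c}^2$. To recover control of the full $V$-norm I would apply Poincar\'e on the conduit (where $u^c$ vanishes at the endpoints of $\Omega^c$) to bound $\|u^c\|_{1,c}$ by $|u^c|_{1,c}$, and then the elementary inequality $\tfrac12(a+b)^2\leq a^2+b^2$ to pass from $|u^m|_{1,m}^2+\|u^c\|_{1,c}^2$ to $\|u\|_V^2$. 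This gives a coercivity constant $\beta=\tfrac12\min\{\mathcal{K},\,D/(1+C_P^2)\}>0$.

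Finally, continuity of $F$ is routine: $|F(v)|\leq\|f^m\|_{0,m}\|v^m\|_{0,m}+\|f^c\|_{0,c}\|v^c\|_{0,c}$, and Poincar\'e once more bounds the $L^2$ norms of $v^m$ and $v^c$ by $\|v\|_V$, using the hypothesis $f^m\in L^2(\Omega^m)$, $f^c\in L^2(\Omega^c)$. With $a$ bounded and coercive and $F$ bounded, the Lax--Milgram theorem delivers a unique $u\in V$ solving $a(u,v)=F(v)$ for all $v\in V$, which is the assertion. The one point I would check most carefully is the sign bookkeeping in the exchange terms, since the minus sign in the third integral defining $a_c$ is exactly what makes the cross terms cancel and leaves a nonnegative square; without it coercivity would fail.
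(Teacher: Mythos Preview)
Your proposal is correct and follows precisely the approach the paper itself indicates: the paper does not give a detailed proof but simply states that existence and uniqueness follow from the Lax--Milgram theorem, continuity and coercivity of $a(\cdot,\cdot)$ being satisfied. Your verification supplies the details the paper omits, and in particular your observation that the two exchange integrals in $a_c$ combine into the nonnegative square $\alpha_{ex}\int_0^L(u^m(x,0)-u^c(x))^2\,dx$ is exactly the key point needed for coercivity.
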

\begin{rmq} We remark that the form $a_m$ is $|\cdot|_{1,\Omega^m}$-coercive on 
 $H_0^1(\Omega^m)$ and the form $a_c$ is  $\lVert\cdot\lVert_{1,\Omega^c}$-coercive on $H_0^1(\Omega^c)$. Namely, there exist two 
 constants $c_1> 0$, $c_2> 0$ such that:
\begin{eqnarray}\label{coercivem}
 a_m(v^m,v^m)\geq c_1|v^m|_{1,m}^2, \forall  v^m\in H_0^1(\Omega^m),
\end{eqnarray}
and
\begin{eqnarray}\label{coercivec}
a_c(v^c,v^c)\geq c_2\lVert v^c\lVert_{1,c}^2, \forall  v^c\in H_0^1(\Omega^c).
\end{eqnarray}

\end{rmq}
We end this section with some notation. Let $\mathbb{P}^k$ and $\mathbb{Q}^k$
be the space of polynomials of total and partial degree not larger than $k$, respectively. In order to avoid excessive use of constants, 
the abbreviations $x\lesssim y$ and $x\sim y$ stand for $x\leqslant c y$ and $c_1 x\leqslant y\leqslant c_2 x$, respectively, with
positive constants independent of $x$, $y$ or $\cT_h$ (meshes).

%Let $[H^m(\Omega)]^N$ be the space of
%vector valued functions $\textbf{v}=(v_1,\ldots,v_N)$ with components  $v_i$ in $H^m(\Omega)$.
\section{Anisotropic finite element method for CCPF/Darcy  model}\label{section3}
The first two sections introduce general aspects of the discretization, e.g; the finite element formulation.
Section \ref{soussection4} is then devoted to the introduction of anisotropic quantities. The general framework (mesh and general assumptions) 
will be discussed in Section \ref{soussection6}. As it turns out, the assumptions on the mesh which 
are introduced for anisotropic elements are quite weak, are standard in anisotropic a posteriori error analysis and are similar to the 
one for isotropic elements.
\subsection{Discretization of the domain $\Omega$}\label{domain}
Since the existence of Dirac delta function, the analytic solution $u^m$ of (\ref{model}) may have anisotropic behavior near
the straight line 
$\{y=0\}$. Then, we consider to use anisotropic mesh with a small mesh size on $y$-direction near the line $\{y=0\}$
and a larger mesh size  elsewhere. 
%For the smooth solution $u^c$ in pipe tube, we consider to use appropriate finite element 
%approximation scheme one regular grids.

We now let $\cT_h^+$ and $\cT_h^-$ be members of families of triangulations of $\overline{\Omega_{+}^m}$ and 
 $\overline{\Omega_{-}^m}$, respectively, by triangles or rectangles $K$ of diameter $h_K$, and we 
 assume that the vertices of $\cT_h^+$ and $\cT_h^-$ coincide on the interface $[0,L].$ Also, we let 
 $h:=\max\{h_1,h_2\}$, where $h_*:=\max\{h_K: \mbox{  } K\in\cT_h^*\}$ for each $*\in\{+,-\}.$ Thereby, let $E_h$ be the corresponding 
 induced triangulations of $\overline{\Omega^c}$. Noted that according to this discretization, $E_h$ is not necessarily regular. 
 Finally $\cT_h=\cT_h^+\cup \cT_h^-$ is the triangulation on $\overline{\Omega}$. 
 
 For any $K\in \mathcal{T}_h$, we denote by $\cE (K)$ (resp. $\cN(K))$
the set of its edges (resp. vertices)  and we set 
$\cE_h=\displaystyle\bigcup_{K\in\mathcal{T}_h} \cE(K)$, $\cN_h=\displaystyle\bigcup_{K\in\mathcal{T}_h} \cN(K)$. 
For $\mathcal{A}\subset \overline{\Omega}$ we define 
$$
\cE_h(\mathcal{A}):=\left\{ E\in\cE_h: E\subset \mathcal{A}\right\} \mbox{  and  } 
\cN_h(\mathcal{A}):=\left\{\textbf{x}\in\cN_h: \textbf{x}\in \mathcal{A}\right\}.
$$

The measure of an element or edge is denoted by $|K|:=\mbox{meas}_i(K)$ and $|E|:=\mbox{meas}_{i-1}(E)$, respectively, where 
$i=2$.

For an edge $E$ of a element $K$ introduce the outer normal vector by $\textbf{n}=(n_x,n_y)^{\top}$.
Furthermore, for each segment $E$ we fix one of the two normal 
vectors and denote it by $\textbf{n}_E$. We introduce additionally the tangent vector $\textbf{t}=\textbf{n}^{\top}:=
(-n_y,n_x)^{\top}$ such that it is oriented positively (with respect to $K$). Similarly set $\textbf{t}_E:=\textbf{n}_E^{\top}$.
The superscript $\top$ denotes transposition.
%With every edges $E\in\cE_h$, we associate a unit vector $\textbf{n}_E$ 
%such that $\textbf{n}_E$ is orthogonal
%to $E$ and equals to the unit exterior normal vector to $\partial \Omega$ if $E\subset \partial \Omega$. 
For any $E\in\cE_h$
and any piecewise continuous function $\varphi$, 
we denote by $[\varphi]_E$ its jump   across $E$ in the direction of $\textbf{n}_E$:
\begin{eqnarray*}
 [\varphi]_E(x):=
 \left\{
\begin{array}{cccccc}\label{r}
%\left\{
&\displaystyle\lim_{t\rightarrow 0+} \varphi(x+t\textbf{n}_E)-\lim_{t\rightarrow 0+} \varphi (x-t\textbf{n}_E) &
&\mbox{for an interior edge/face $E$,}&\\
&- \displaystyle\lim_{t\rightarrow 0+}\varphi (x-t\textbf{n}_E)& &\mbox{for a boundary edge/face $E$}.&
\end{array}
\right.
\end{eqnarray*}
Note that the sign of $[\varphi]_E$ depends on the orientation of $\textbf{n}_E$. However, terms such as a gradient jump 
$[\nabla \varphi\cdot\textbf{n}_E]_E$ are independent of this orientation.

Furthermore one requires local subdomains (also known as patches). As usual, let $W_K$ be the union of all elements having a common face 
with $K$. Similarly let $W_E$ be the union of both elements having $E$ as face (with appropriate modifications for a boundary face).
By $W_{\textbf{x}}$ we denote the union of all elements having $\textbf{x}$ as node.

Later on we specify additional, mild mesh assumptions that are partially due to the anisotropic discretization.

\subsection{Discrete formulation}
We apply the finite element based on the anisotropic mesh $\cT_h$ to solve the 
CCPF model (\ref{model})-(\ref{boundary}). We assume a given approximation space $V_h$ made of polynomials on each element $K$ of the triangulation
$\cT_h$ such that $V_h^c\subset H_0^1(\Omega^c)$ (but not necessary$V_h^m\subset H_0^1(\Omega^m)$), 
where 
$V_h^*=\{v_{|\Omega^*}: v\in V_h\}$ for each $*\in\{m,c\}$. A precise description of the properties 
that this approximation space $V_h$ has to satisfy is given in Section \ref{soussection6}. 

Because the approximation space $V_h^m$ may not be included in the continuous space $ H_0^1(\Omega^m)$, we define the approximation solution by 
using the weaker bilinear form $a_h(.,.)$:
\begin{eqnarray}\nonumber
 a_h(u,v)&:=&\displaystyle\sum_{K\in\cT_h}\mathbb{K}\nabla u^m\cdot\nabla v^m (x,y) dx dy\\\nonumber
 &+&\int_{0}^L D\frac{du^c}{dx}\cdot \frac{dv^c}{dx} dx\\
 &+&\alpha_{ex} \int_{0}^{L}\left(u^m(x,0)-u^c(x)\right) v^m(x,0)dx\\\nonumber
 &-&\alpha_{ex}\int_{0}^{L}\left(u^m(x,0)-u^c(x)\right) v^c(x,0)dx.
\end{eqnarray}
Then, the finite element discretization of (\ref{FF}) is to find $u_h\in V_h$ such that 
\begin{eqnarray}\label{FFh}
 a_h(u_h,v_h)+J(u_h,v_h)=F(v_h)\mbox{     } \forall v_h\in V_h.
\end{eqnarray}
This is the natural discretization of the weak formulation (\ref{FF})  except that  the penalizing term 
$J(u_h,v_h)$ is added (only nonconforming case). These penalizing term will be specified later in the 
Section \ref{Finite element}. The space $V_h$ is equipped with the norm $\lVert\cdot\lVert_h:=\lVert\cdot\lVert_V$ if 
$V_h\subset V$ whereas the norm $\lVert\cdot\lVert_h$ on $V_h$ will be specified later in Section \ref{Finite element} for non-conforming case.

\subsection{Anisotropic quantities}\label{soussection4}
For an
element $K\in\cT_h$ we define two anisotropy vectors $\textbf{P}_{i,K}$, $i=1,2$, that reflect the main 
anisotropy directions of that element. These anisotropy vectors are defined and visualized below as well
(Figs. \ref{rectangle} and \ref{triangle} below). 
The anisotropy vectors $\textbf{P}_{i,K}$ are enumerated such that lengths are decreasing, 
i.e. 
$|\textbf{P}_{1,K}|\geqslant |\textbf{p}_{2,K}|$.
 The anisotropic lengths of an element $K$ are now defined by 
$h_{j,K}:=|\textbf{P}_{j,K}|$, $(j=1,2)$ which implies $h_{1,K}\geq h_{2,K}$.
The smallest of these lengths
is particularly important; thus 
we introduce $\displaystyle h_{\min,K}:=h_{2,K}\equiv \min_{i\in\{1,2\}}h_{i,K}$.
Finally the anisotropy vectors $\textbf{P}_{j,K}$ are arranged columnwise to define a matrix: 
\begin{eqnarray}\label{matrixstructure}
\mathbb{C}_K&:=&[\textbf{P}_{1,K},\textbf{P}_{2,K}]\in\mathbb{R}^{2,2}.
\end{eqnarray}
Note that $\mathbb{C}_K$ is orthogonal since 
anisotropy vectors $\textbf{P}_{j,K}$ are also orthogonal and 
\begin{eqnarray}
 \mathbb{C}_K^\top\cdot\mathbb{C}_K=\mbox{diag}\{h_{1,K}^2,h_{2,K}^2\}.\end{eqnarray}
 Furthermore introduce the height $h_{E,K}$ over an edge $E$ of an element $K$ by 
\begin{eqnarray}
h_{E,K} &:=&\frac{|K|}{|E|}.\end{eqnarray}
Sometimes it is more convenient to have face-related data instead of element-related data. Hence for an interior 
face $E=K_1\cap K_2$ we introduce
\begin{eqnarray*}
 h_{\min,E}:=\frac{h_{\min,K_1}+h_{\min,K_2}}{2} \mbox{       and           } h_E:=\frac{h_{E,K_1}+h_{E,K_2}}{2}.
\end{eqnarray*}
For boundary faces $E\subset \partial K$ simply set $h_{\min,E}:=h_{\min,K}$, $h_E:=h_{E,K}$. 
The last 
assumption from below (Assumption \ref{ass1}) readily implies
\begin{eqnarray}\label{Hmaillage2}
 h_E\sim h_{E,K_1}\sim h_{E,K_2} \mbox{   and   } h_{\min,E}\sim h_{\min,K_1}\sim h_{\min,K_2}.
\end{eqnarray}
\begin{figure}[http]
%\begin{minipage}[c]{.49\textwidth}
\centering
\begin{center}
\begin{tikzpicture}
 \draw [line width=1pt][>=stealth,->](1,1)--(1,3)node [above] {$\textbf{P}_{2,K}$};
 
 \draw [line width=0.2pt][>=latex,->](-8,1)--(-6,1)node [right] {$x$};
 \draw [line width=0.2pt][>=latex,->](-8,1)--(-8,3)node [above] {$y$};
 %\draw [line width=3pt] [->] (1.86,0)--(1.8,2.4);
 \draw [line width=1pt][>=stealth,->](1,1)--(5,1)node [right] {$\textbf{P}_{1,K}$};
 \draw [line width=0.5pt](1,3)--(5,3);
 \draw [line width=0.5pt](5,1)--(5,3);
 \draw [line width=0.5pt](1.4,1) arc (0:90:0.43);
 \draw (1.2,1.39) node [below] {$\bullet$};
 
 %\draw [line width=1pt][dashed] (3.5,2.5)--(1,1);
 %\draw [line width=1pt][dashed] (3.5,2.5)--(10,1.5);
 %\draw [line width=3pt] [->](3,0.97)--(3.5,2.5);
 %\draw (1.86,0)--(1.8,2.4);
 %\draw (1.86,0) arc (0:90:1.86);
 
\end{tikzpicture}
\caption{\footnotesize{\Large{Notation of rectangle $K$}}}
\label{rectangle}
\end{center}
\end{figure}
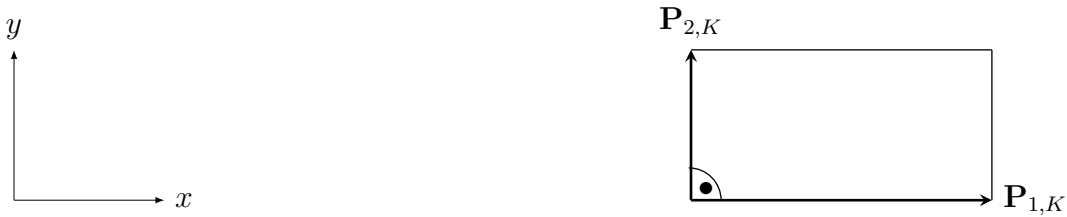
\begin{figure}[http]
%\begin{minipage}[c]{.49\textwidth}
\centering
\begin{center}
\begin{tikzpicture}[scale=0.7]
 \draw [line width=1.2pt] [>=stealth,->](0,0)--(8,0) node [midway,below] {$\textbf{P}_{1,K}$};
 \draw (0,0) node [left] {$P_0$};
 \draw [line width=1.2pt] [>=stealth,->](0,0)--(8,0) node [below] {$P_1$};
 \draw [line width=1.2pt] [>=stealth,->] (1.86,0)--(1.8,2.4) node [midway,right] {$\textbf{P}_{2,K}$};
 \draw [line width=1.2pt] [>=stealth,->] (1.86,0)--(1.8,2.4) node [above] {$P_2$};
 \draw [line width=0.5pt](0,0)--(1.8,2.4);
 \draw [line width=0.5pt](8,0)--(1.8,2.4);
 
 \draw [line width=0.2pt][>=latex,->](-8,0)--(-5,0)node [right] {$x$};
 \draw [line width=0.2pt][>=latex,->](-8,0)--(-8,2.4)node [above] {$y$};
 %\draw (0,0) node [below] {$Po$};
 \draw (2.2,0.6) node [below] {$\bullet$};
 %\draw (1.8,2.4)node [right] {$P2$} ;
 %\draw (1.86,0)--(8,0);
 %\draw (1.86,0)--(1.8,2.4);
  \draw [line width=0.5pt](2.7,0) arc (0:90:0.8);
\end{tikzpicture}
\caption{\footnotesize{\Large{Notation of triangle $K$}}}
\label{triangle}
\end{center}
\end{figure}
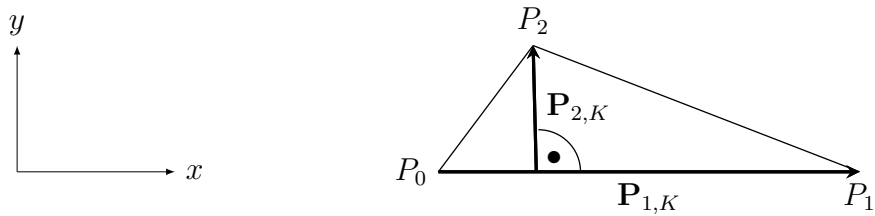
\subsection{Relation between anisotropic mesh and anisotropic function}\label{soussection5}
When investigating a residual error estimator for anisotropic meshes, we want to employ the same basic principles as for isotropic meshes. 
More precisely, a certain kind of interpolation error estimates is to be derived first. With its help, the finite element error is
then bounded globally from above. 

Proceeding this way, we naturally use different and more technical methods than for isotropic meshes. But even more important, the results of
isotropic meshes can not be transferred identically to anisotropic meshes.  A certain factor appears now both at the 
interpolation error estimates (see Section \ref{Clementsection}) and the finite element error estimate (cf. Section \ref{Estimatorssection}).
This factor is related to how good the chosen anisotropic mesh corresponds to the anisotropic function under consideration. 
Basically, the better this correspondence the smaller the factor (but always $\geqslant 1$), and the better the estimate (in a meaning that 
is to be specified later on). The importance of an anisotropic mesh that corresponds to an anisotropic function can be described and 
interpreted in different ways (Ref. \cite[Page 33]{Kunert:98}).

We present now the definition of an alignment measure which measures the alignment of mesh and function.

\begin{dfn}\normalfont(\textbf{Alignment measure} $m_1$) 
%\begin{enumerate}
 %\item 
 Let $v\in H^1(\Omega)$ be an arbitrary non-constant function. 
 Define the matching function $m_1(.,.): H^1(\Omega)\times \cT_h\longrightarrow \mathbb{R}$ 
 by \cite{ESK:04,Kunert:98,creuse:03}
 \begin{eqnarray}\label{mesure1}
 m_1(v,\cT_h)&:=&
\frac{\left(\displaystyle\sum_{K\in\mathcal{T}_h}h_{\min,K}^{-2}
\parallel \mathbb{C}_K^{\top}\nabla v\parallel_K^2\right)^{1/2}} {\parallel \nabla v\parallel_{\Omega}}.
\end{eqnarray}
\end{dfn}

\begin{com}\normalfont (\textbf{Alignment measure})
 For a better understanding we discuss here the behaviour of the alignment measure. The structure 
of the matrix $\mathbb{C}_K$ from (\ref{matrixstructure}) readily gives the crude bounds,
\begin{eqnarray}
 1\leqslant m_1(v,\cT_h)\leqslant \displaystyle\max_{K\in\cT_h}\frac{h_{\max,K}}{h_{\min,K}},
\end{eqnarray}
where $h_{\mbox{max},K}\equiv h_{1,K}$ temporarily denotes the largest element dimension. Although this bound is pratically useless, it 
implies an interesting by-product for isotropic meshes. There one concludes $m_1(v,\cT_h) \sim 1$, and the alignment measure merges with 
other constants and thus "vanishes".

For anisotropic meshes, the term $\mathbb{C}_K^{\top}\nabla v$ of (\ref{mesure1}) contains directional derivatives along  
the main anisotropic directions 
$\textbf{P}_{i,K}$ of the element $K$ [since  $\mathbb{C}_K=[\textbf{P}_{1,K},\textbf{P}_{2,K}]$, see (\ref{matrixstructure})]. 
Consider first anisotropic elements that are aligned with an anisotropic function $v$. Then the long anisotropic element
direction $\textbf{P}_{1,K}$ is associated with a small directional derivative $\textbf{P}_{1,K}^{\top}\cdot\nabla v$.
Conversely, the short direction $\textbf{P}_{2,K}$ has a comparatively large directional derivative 
$\textbf{P}_{2,K}^{\top}\cdot\nabla v$. Consequently the numerator and denominator of $m_1(.,.)$ will be balanced, and 
$m_1(.,.)\sim 1$. Supplementary details are given in Ref. \cite{Kunert:02}.

If the anisotropic mesh is not aligned with an anisotropic function $v$, then similar considerations imply that the numerator and 
denominator of $m_1(.,.)$  are no longer balanced , and thus $m_{1}(.,.)>> 1$.

Summarising, the better the anisotropic mesh $\cT_h$ is aligned with an anisotropic function $v$, the smaller 
$m_1(.,.) $ will be . This results in sharper error bounds.
\end{com}

\subsection{Requirements on the mesh and the elements}\label{soussection6}
\begin{Ass}\normalfont (\textbf{Mesh assumptions in $\Omega$}) \label{ass1}
 Let $a_1, \ldots, a_n$ be the nodes of the triangulation $\cT_h$. In addition to the usual conformity conditions 
of the mesh (see \cite [Chapter 2]{Ciarlet:78}) we demand the following assumptions.
\begin{itemize}
 \item The number of element that contain the node $a_j$ is bounded uniformly.
 \item The dimensions of adjacent element must not change rapidly, i.e. 
 \begin{eqnarray}\label{Hmaillage1}%This is the natural discretization of the weak formulation (\ref{la1})  except that  the penalizing term 
%$\textbf{J}(\textbf{u}_h,\textbf{v}_h)$ is added (only nonconforming case). 
  h_{i,K'\sim h_{i,K}}\mbox{       } \forall  K,K' \mbox{  with  } K\cap K'\neq \emptyset, i=1,2.
 \end{eqnarray}
 \end{itemize}
 
\end{Ass}
\begin{Ass}\normalfont (\textbf{General assumptions})\label{ass2}
  In our analysis, a Cl\'ement type operator $\mbox{I}_{\mbox{Cl}}^0$ plays a vital role.
Although the precise definition will be postponed until Section \ref{Clementsection}, we briefly describe the image space of this operator. 
Roughly speaking, its functions are continuous and piecewise linear for an element $K\in\cT_h$.
%(with
%$I_{\mbox{cl}}^m$) and for segment $E\in\cT_h^c$ (with $I_{\mbox{cl}}^c$). 
From now on, we use the notation 
$$V_{\mbox{Cl}}^0:=\left[Im\left(I_{\mbox{Cl}}^0\right)\right],$$
 for the Cl\'ement interpolation space in $\Omega$. The general condition is now as follows. \\
 (H.1) The 
 space $V_h$ is large enough such that it contains the Cl\'ement interpolation space $V_{\mbox{Cl}}^0$, that is, 
 $V_{\mbox{Cl}}^0\subset V_h\cap H_0^1(\Omega)$. \\
 (H.2) In order to obtain robust discrete solution, the elements have to be stable (i.e. the form bilinear $a_h$ must be 
 $\lVert\cdot\lVert_h-$coercive on $V_h$).
\end{Ass}\mbox{}\\
\textbf{Crouzeix-Raviart Property for Nonconforming Approximation.} For nonconforming approximation we require the 
"Crouzeix-Raviart" property:
\begin{eqnarray}\label{crouzeix}
 (\mbox{CR}): \int_E[u_h]_E&=&0, \forall E\in\cE_h.
\end{eqnarray}

\section{Analytical tools}\label{section4}
%\subsection{Reference element, affine linear transformation}

\subsection{Cl\'ement interpolation}\label{Clementsection}
For the analysis we require some interpolation operator that maps a function from $H_0^1(\Omega)$ to some continuous, piecewise polynomial 
function $V_{\mbox{Cl}}^0$. Hence Lagrange interpolation is unsuitable, but Cl\'ement like interpolation techniques have proven to be useful. 
The image space $V_{\mbox{Cl}}^0$ will be given by means of its basis functions. 
To this end denote by $F_K$ temporarily that affine linear 
transformation that maps the reference element $\overline{K}$ into the actual element $K$. 
For simplicity we describe the interpolation for 
scalar functions.
%for vector-valued functions the interpolation acts componentwise.

The basis function $\phi_j$ associated with a node $\textbf{x}_j$ is now uniquely determined by the condition
\begin{eqnarray}
 \phi_j(\textbf{x}_i)&=& \delta_{i}^j \mbox{    }  \forall \textbf{x}_i\in \mathcal{N}_h(\Omega).
\end{eqnarray}

Then $V_{\mbox{Cl}}^0$ is defined as the space spanned by the functions $\phi_j$, for all interior nodes $\textbf{x}_j\in \cN_h(\Omega)$. 
Equivalently, it can be expressed as 
\begin{eqnarray}
 V_{\mbox{Cl}}^0&:=&\left\{v_h\in C^0(\Omega):  \mbox{  } {v_h}_{|K}\circ F _K\in \mathcal{P}^1
 (\bar{K}), 
 \forall K\in \cT_h\right\}\cap H_0^1(\Omega),
\end{eqnarray}
with $\mathcal{P}^1(\bar{K})=\mathbb{P}^1(\bar{K})$ if $K$ is triangle and $\mathcal{P}^1(\bar{K})=\mathbb{Q}^1(\bar{K})$ if $K$ is rectangle.
$F_K$ is defined as above.

Next, the Cl\'ement interpolation operator will be defined via the basis functions $\phi_j\in V_{\mbox{Cl}}^0$.
\begin{dfn}\normalfont
 (\cite[Section 4]{ESK:04} \textbf{Cl\'ement interpolation operator}) 
 Consider an interior node $\textbf{x}_j\in \cN_h(\Omega)$ and the patch $w_{\textbf{x}_j\equiv \mbox{supp}(\phi_j)},$ cf. 
 Section \ref{domain}. Define the local $L^2$ projection operator $P_j: L^2(w_{\textbf{x}_j})\longrightarrow 
 \mathbb{P}^{0}(w_{\textbf{x}_j})$ by
 \begin{eqnarray}
  \int_{w_{\textbf{x}_j}} (v-P_jv)w=0 \mbox{   } \forall w\in \mathbb{P}^0(w_{\textbf{x}_j}).
 \end{eqnarray}
%For vector-valued functions $\textbf{v}\in [L^2(w_{\textbf{x}_j})]^N$ define the projection componentwise.
Then define the Cl\'ement interpolation operator $I_{\mbox{Cl}}^0: H_0^1(\Omega)\longrightarrow V_{\mbox{Cl}}^0\subset H_0^1(\Omega)$ by
\begin{eqnarray}
 \mbox{I}_{\mbox{Cl}}^0v:=\displaystyle\sum_{\textbf{x}_j\in \cN_h(\Omega)}P_j(v)(\textbf{x}_j)\phi_j.
\end{eqnarray}

\end{dfn}

We can prove the following interpolation estimates \cite{creuse:03, ESK:04} (see also, \cite{H:2015,HA:2016}):
\begin{lem}\label{Interpolationanisotrope}\normalfont
 For all $v\in H_0^1(\Omega)$, we have:
\begin{eqnarray}\label{cle1}
 \sum_{K\in\mathcal{T}_h}h_{\min,K}^{-2}\parallel v-I_{\mbox{Cl}}^0 v\parallel_K^2&\lesssim &
m_1^2(v,\mathcal{T}_h)\parallel \nabla v\parallel_{\Omega}^2,\\\label{cle2}
\sum_{E\in\cE_h(\bar{\Omega})}\frac{h_E}{h_{\min,E}^2}\parallel v-I_{\mbox{Cl}}^0v\parallel_E^2
&\lesssim& m_1^2(v,\mathcal{T}_h)\parallel \nabla v\parallel_{\Omega}^2.
\end{eqnarray}
\end{lem}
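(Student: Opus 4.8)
The plan is to reduce both estimates to a single element-local bound and then to sum over the mesh using the finite-overlap property guaranteed by Assumption \ref{ass1}. First I would record the two structural facts that drive the argument: the operator $I_{\mbox{Cl}}^0$ reproduces constants (each $P_j$ is an $L^2$-projection onto $\mathbb{P}^0(w_{\textbf{x}_j})$ and the $\phi_j$ form a partition of unity, so $I_{\mbox{Cl}}^0 c = c$ for every constant $c$), and the affine map $F_K:\bar K\to K$ can be chosen so that its linear part is exactly $\mathbb{C}_K$ of \eqref{matrixstructure}. The chain rule then yields $\hat\nabla(v\circ F_K)=\mathbb{C}_K^\top\nabla v$, i.e. the \emph{reference} gradient of $v$ is precisely the vector $\mathbb{C}_K^\top\nabla v$ appearing in the numerator of $m_1$; moreover $|K|=|\det\mathbb{C}_K|\,|\bar K|\sim h_{1,K}h_{2,K}$ because the columns of $\mathbb{C}_K$ are orthogonal of lengths $h_{1,K},h_{2,K}$.

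For \eqref{cle1} I would work element by element. Pulling $v-I_{\mbox{Cl}}^0 v$ back to the reference configuration and invoking a standard (isotropic, shape-regular) Cl\'ement/Poincar\'e estimate on the reference patch, which relies only on the reproduction of constants, gives $\|v-I_{\mbox{Cl}}^0 v\|_K^2\lesssim h_{1,K}h_{2,K}\,\|\hat\nabla\hat v\|_{\hat W_K}^2$; transforming each neighbouring element $K'$ back via its own $F_{K'}$ and using $h_{i,K'}\sim h_{i,K}$ from \eqref{Hmaillage1} to cancel the Jacobian factors, this becomes the clean element estimate $\|v-I_{\mbox{Cl}}^0 v\|_K^2\lesssim\sum_{K'\in W_K}\|\mathbb{C}_{K'}^\top\nabla v\|_{K'}^2$. (In the isotropic case this reduces to the familiar $\|v-I_{\mbox{Cl}}^0 v\|_K\lesssim h_K\|\nabla v\|_{W_K}$, a useful consistency check.) Multiplying by $h_{\min,K}^{-2}$, using $h_{\min,K}\sim h_{\min,K'}$, and summing over $K$ --- where the bounded number of elements meeting each node from Assumption \ref{ass1} controls the overlap of the patches --- collapses the right-hand side to $\sum_{K'}h_{\min,K'}^{-2}\|\mathbb{C}_{K'}^\top\nabla v\|_{K'}^2$, which by \eqref{mesure1} equals $m_1^2(v,\cT_h)\|\nabla v\|_\Om^2$.

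For \eqref{cle2} the extra ingredient is an anisotropic trace inequality. Pulling back to the reference element, a standard trace theorem gives $\|\hat w\|_{\hat E}^2\lesssim\|\hat w\|_{\hat K}^2+\|\hat\nabla\hat w\|_{\hat K}^2$ for $w:=v-I_{\mbox{Cl}}^0 v$; transforming back and using $h_{E,K}=|K|/|E|$ produces $\|w\|_E^2\lesssim h_{E,K}^{-1}\bigl(\|w\|_K^2+\|\mathbb{C}_K^\top\nabla w\|_K^2\bigr)$. Multiplying by $h_E/h_{\min,E}^2$ and invoking \eqref{Hmaillage2} (so that $h_E\sim h_{E,K}$ and $h_{\min,E}\sim h_{\min,K}$) bounds the edge summand by $h_{\min,K}^{-2}\bigl(\|w\|_K^2+\|\mathbb{C}_K^\top\nabla w\|_K^2\bigr)$. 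The first term is exactly what was summed for \eqref{cle1}; for the second I would use the companion gradient-interpolation estimate $\|\mathbb{C}_K^\top\nabla(v-I_{\mbox{Cl}}^0 v)\|_K^2\lesssim\sum_{K'\in W_K}\|\mathbb{C}_{K'}^\top\nabla v\|_{K'}^2$, proved by the same scaling argument. Summing over all edges with the finite-overlap property again yields $m_1^2(v,\cT_h)\|\nabla v\|_\Om^2$.

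The main obstacle is establishing the reference-element estimates with constants that are \emph{uniform in the aspect ratio}. One cannot simply quote a shape-regular trace or Poincar\'e inequality on $K$, since the constants there would blow up as $h_{1,K}/h_{2,K}\to\infty$; the whole point of introducing $\mathbb{C}_K$, the height $h_{E,K}=|K|/|E|$, and the orthogonality relation $\mathbb{C}_K^\top\mathbb{C}_K=\mbox{diag}\{h_{1,K}^2,h_{2,K}^2\}$ is to absorb the anisotropy into the transformation $F_K$ so that only isotropic, reference-element inequalities with absolute constants remain. The delicate bookkeeping is the patch argument: neighbouring elements carry their own transformations, and it is the slow-variation assumption \eqref{Hmaillage1} together with the bounded node valence that let one treat a whole patch $W_K$ with a single comparable scaling, thereby confining all mesh-anisotropy dependence to the single factor $m_1(v,\cT_h)$.
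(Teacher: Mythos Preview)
The paper does not prove this lemma itself; it merely states it and cites \cite{creuse:03,ESK:04} (with further pointers to \cite{H:2015,HA:2016}). Your outline is precisely the anisotropic scaling strategy those references use: exploit constant reproduction of $I_{\mbox{Cl}}^0$, absorb the anisotropy into $\mathbb{C}_K^\top\nabla v$ via a reference-element transformation, use an anisotropic trace inequality for the edge term, and sum with the finite-overlap and size-comparability hypotheses of Assumption~\ref{ass1}. Two technical points would need tightening if you wrote this out in full. First, for triangles the linear part of $F_K$ is \emph{not} literally $\mathbb{C}_K$: the columns of $\mathbb{C}_K$ are orthogonal by construction (longest edge and height, cf.\ Fig.~\ref{triangle}), so the image of $\bar K$ under a map with Jacobian $\mathbb{C}_K$ is a right triangle, not $K$; the references work with the true Jacobian $B_K$ and then invoke the equivalence $\|B_K^\top\nabla v\|_K\sim\|\mathbb{C}_K^\top\nabla v\|_K$, which holds uniformly under the mesh assumptions. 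Second, $I_{\mbox{Cl}}^0$ does not commute with the pull-back (the nodal patches $w_{\textbf{x}_j}$ span several elements, each with its own $F_{K'}$), so one cannot literally ``invoke a reference Cl\'ement estimate''; the route taken in the cited works is to insert a patch-mean constant $c_K$, bound $\|v-c_K\|$ on the patch by an anisotropic Poincar\'e inequality (this is where the scaling argument really lives), and control $\|I_{\mbox{Cl}}^0(v-c_K)\|_K$ via the local $L^2$-stability of $I_{\mbox{Cl}}^0$, which follows from $|K|\sim|w_{\textbf{x}_j}|$ thanks to \eqref{Hmaillage1}. With those adjustments your argument coincides with the one the paper is citing.
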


\subsection{Bubble functions, extension operator, inverse inequalities}\label{bulle}
For the analysis we require bubble functions and extension operators that satisfy certain properties. We 
start with the reference element $\overline{K}$ and define an element bubble function $b_{\overline{K}}\in C(\overline{K})$. 
We also require an edge bubble function $b_{\overline{E},\bar{K}}\in C(\overline{K})$ for a face
$\overline{E}\subset \partial K$.  Without loss of generality assume that $\overline{E}$ is on the 
$\overline{x}$ axis. 
Furthermore an extension operator $F_{\mbox{ext}}: C(\overline{E})\longrightarrow C(\overline{K})$ will be necessary that acts on 
some function $v_{\overline{E}}\in C(\overline{E})$. The table below give the definitions in each case (i.e. triangle or rectangle element).
%\begin{figure}[htpb]
\begin{center}
\begin{tikzpicture}
 \draw (0,0) grid[step=5.] (15,10);
 \draw (0,8)--(15,8);
 \draw (2,8.5) node [above]{$\mbox{Ref. element $\bar{K}$}$};
 \draw (7,8.5) node [above]{$\mbox{Bubble functions}$};
 \draw (12,8.5) node [above]{$\mbox{Extention operator}$};
 \draw (7.5,7) node [above]{$b_{\bar{K}}:=3^3\bar{x}\bar{y}(1-\bar{x}-\bar{y})$};
 \draw (7.5,5.5) node [above]{$b_{\bar{E},\bar{K}}:=2^2\bar{x}(1-\bar{x}-\bar{y})$};
 \draw (12.5,6) node [above]{$F_{ext}(v_{\bar{E}})(\bar{x},\bar{y}):=v_{\bar{E}}(\bar{x})$};
 \draw (7.5,3) node [above]{$b_{\bar{K}}:= 2^4\bar{x}(1-\bar{x})\bar{y}(1-\bar{y})$};
 \draw (7.5,1) node [above]{$b_{\bar{E},\bar{K}}:=2^2\bar{x}(1-\bar{x})(1-\bar{y})$};
 \draw (12.5,2.5) node [above]{$F_{ext}(v_{\bar{E}})(\bar{x},\bar{y}):=v_{\bar{E}}(\bar{x})$};
 \draw [line width=1pt](2,6)--(3.5,6);
 \draw [line width=1pt](2,6)--(2,7.5);
 \draw [line width=1pt](2,7.5)--(3.5,6);
 \draw (2.5,5.4) node [above]{$0\leqslant \bar{x},\bar{y}$};
 \draw (2.6,5.0) node [above] {$\bar{x}+\bar{y}\leqslant 1$};
 \draw [line width=1pt](2,2)--(2,4);
 \draw [line width=1pt](2,2)--(4,2);
 \draw [line width=1pt](4,2)--(4,4);
 \draw [line width=1pt](2,4)--(4,4);
 %\draw [line width=1pt][dashed](4,2)--(2.75,2.75);
 %\draw[line width=1pt][dashed] (2,4)--(2.75,2.75);
 \draw (1.7,1.2) node [above]{$0\leqslant \bar{x},\bar{y}\leqslant 1$};
 %\draw (1.7,0.6) node [above]{$\bar{x}+\bar{y}+\bar{z}\leqslant 1$};
\end{tikzpicture}\mbox{}\\\vspace*{0.5cm}
%\end{center}
%\begin{center}
\tablename  \mbox{     }1: {\footnotesize{\textbf{ \mbox{      
Bubble functions and  extension operator on  $\bar{K}$       }}}}
\end{center}
\label{Ou}
\mbox{}\\
%\end{figure}
The element bubble function $b_{K}$ for the actual element $K$ is obtained  simply by the corresponding affine linear
transformation. Similarly the edge/face bubble function $b_{E,K}$ is defined. Later on an edge/face bubble function $b_E$
is needed on the domain $w_E=K_1\cup K_2.$ This is achieved by an elementwise definition, i.e. 
\begin{eqnarray*}
 {b_{E}}_{|K_i}:= b_{E,K_i}, i=1,2.
\end{eqnarray*}
Analogously the extension operator is defined for functions $v_E\in C(E)$. By the same elementwise definition we obtain 
$F_{\mbox{ext}}(v_E)\in C(w_E).$ With these definitions one easily checks
\begin{eqnarray*}
 b_K=0 \mbox{  on } \partial K, \mbox{  } b_E=0 \mbox{  on  } \partial w_E, \mbox{  } 
 \parallel b_K\parallel_{\infty}=\parallel b_E\parallel_{\infty}=1.
\end{eqnarray*}
Next, one requires the so-called inverse inequalities. They can only be expected to hlod in some finie-dimensional
space. The choice $\mathbb{P}^k$ covers all relevant case of our analysis.
\begin{lem}\normalfont
 (\textbf{Equivalences/Inverse
inequalities for bubble functions})\\ Let $E\in\cE(K)$ be an edge of an element $K$. 
 Consider $v_K\in \mathbb{P}^{k_0}(K)$ and $v_E\in \mathbb{P}^{k_1}(E)$ . 
 Then the following equivalences/inequalities hold. The inequality constants depend on the polynomial
 degree $k_0$ or $k_1$ but not on $K$, $E$ or $v_K$, $v_E.$
 \begin{eqnarray}\label{I1}
  \parallel v_Kb_K^{1/2}\parallel_K&\sim& \parallel v_K\parallel_K\\\label{I2}
\parallel \nabla(v_Kb_K^{1/2})\parallel_K&\lesssim& h_{\min,K}^{-1}\parallel v_K\parallel_K\\\label{I3}
\parallel v_Eb_E^{1/2}\parallel_E&\sim& \parallel v_E\parallel_E\\\label{I4}
\parallel F_{\mbox{ext}}(v_E)b_E\parallel_K&\lesssim& h_{E,K}^{1/2}\parallel v_E\parallel_E\\\label{I5}
\parallel\nabla(F_{\mbox{ext}}(v_E)b_E)\parallel_K&\lesssim& h_{E,K}^{1/2}h_{\min,K}^{-1}\parallel v_E\parallel_E.
 \end{eqnarray}
\end{lem}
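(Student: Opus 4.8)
The plan is to reduce each of the five relations to the reference element $\overline{K}$ (respectively the reference edge $\overline{E}$), where it becomes an equivalence of norms on a finite-dimensional polynomial space, and then to transform back while carefully tracking the anisotropic scaling factors. Let $F_K$ denote the affine map carrying $\overline{K}$ onto $K$, with linear (Jacobian) part $B_K$. The structural fact I would rely on is that the singular values of $B_K$ are comparable to the anisotropic lengths $h_{1,K}$ and $h_{2,K}=h_{\min,K}$; equivalently $B_K$ plays the role of $\mathbb{C}_K$ from (\ref{matrixstructure}), so that $\parallel B_K^{-\top}\parallel_{\mbox{op}}\sim h_{\min,K}^{-1}$ and $|\det B_K|\sim|K|$. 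Since $b_K=b_{\overline{K}}\circ F_K^{-1}$ and $b_E=b_{\overline{E},\overline{K}}\circ F_K^{-1}$, the pullback of each product $v_Kb_K^{1/2}$, $F_{\mbox{ext}}(v_E)b_E$, and so on, is exactly the corresponding reference-element object acting on $\bar{v}_K:=v_K\circ F_K\in\mathbb{P}^{k_0}(\overline{K})$ or $\bar{v}_E:=v_E\circ F_K\in\mathbb{P}^{k_1}(\overline{E})$.

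On $\overline{K}$ the bubble functions $b_{\overline{K}}$ and $b_{\overline{E},\overline{K}}$ are fixed, smooth, and strictly positive in the interior. Hence the maps $\bar{v}\mapsto\parallel\bar{v}\,b_{\overline{K}}^{1/2}\parallel_{\overline{K}}$ and $\bar{v}\mapsto\parallel\nabla(\bar{v}\,b_{\overline{K}}^{1/2})\parallel_{\overline{K}}$ are a norm and a seminorm on the finite-dimensional space $\mathbb{P}^{k_0}(\overline{K})$, and are therefore equivalent to $\parallel\bar{v}\parallel_{\overline{K}}$ with constants depending only on $k_0$ and $\overline{K}$; the analogous statements hold on $\overline{E}$ for $\mathbb{P}^{k_1}(\overline{E})$ and for the extended functions $F_{\mbox{ext}}(\bar{v}_E)b_{\overline{E},\overline{K}}$. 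This is the classical finite-dimensional norm-equivalence argument, and it is precisely here that the dependence of the constants on the polynomial degrees, together with their independence of $K$ and $E$, originates. For (\ref{I1}) and (\ref{I3}) one direction is immediate from $0\leqslant b_K,b_E\leqslant 1$, and the converse is supplied by this equivalence.

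It then remains to transform back and collect the scaling factors. For the pure $L^2$ estimates (\ref{I1}) and (\ref{I3}) the Jacobian $|\det B_K|^{1/2}$ (respectively the edge-length factor $|E|^{1/2}$) appears identically on both sides and cancels, giving the equivalences directly. For the extension estimate (\ref{I4}) the two-dimensional norm scales by $|\det B_K|^{1/2}$ while the one-dimensional norm scales by $|E|^{1/2}$, and the relation $|\det B_K|/|E|\sim|K|/|E|=h_{E,K}$ produces the claimed factor $h_{E,K}^{1/2}$. For the gradient estimates (\ref{I2}) and (\ref{I5}) the chain rule gives $\nabla_x=B_K^{-\top}\nabla_{\bar{x}}$, so the gradient norm picks up the operator norm $\parallel B_K^{-\top}\parallel_{\mbox{op}}\sim h_{\min,K}^{-1}$; combining this with the $L^2$ scaling, and for (\ref{I5}) with the extension factor $h_{E,K}^{1/2}$ exactly as in (\ref{I4}), yields the stated bounds.

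The main obstacle, and the only place where anisotropy genuinely enters, is the gradient scaling in (\ref{I2}) and (\ref{I5}). A naive isotropic argument would estimate $\parallel B_K^{-\top}\parallel_{\mbox{op}}$ by the reciprocal of the element diameter $h_K=h_{1,K}$, which is far too weak on elements of large aspect ratio. The sharp factor $h_{\min,K}^{-1}$ instead comes from recognizing that $\parallel B_K^{-\top}\parallel_{\mbox{op}}$ equals the reciprocal of the smallest singular value of $B_K$, namely $h_{2,K}^{-1}=h_{\min,K}^{-1}$, which is exactly the content of the identity $\mathbb{C}_K^{\top}\mathbb{C}_K=\mbox{diag}\{h_{1,K}^2,h_{2,K}^2\}$. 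I would also take care to verify that the anisotropic lengths defined geometrically through $\mathbb{C}_K$ are comparable to the singular values of the reference transformation $B_K$, which is immediate for rectangles and holds with constants depending only on $\overline{K}$ for triangles, so that all hidden constants remain independent of the mesh.
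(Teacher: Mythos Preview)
The paper does not actually prove this lemma; its entire proof is a citation to Kunert \cite{GK:2000}. Your argument is precisely the standard scaling proof one finds in that literature: pull back to the reference element, invoke equivalence of norms on a finite-dimensional polynomial space, and push forward while tracking the singular values of the affine map, so that the anisotropic factor $h_{\min,K}^{-1}$ emerges as $\parallel B_K^{-\top}\parallel_{\mbox{op}}$ and the factor $h_{E,K}^{1/2}$ emerges from $|\det B_K|/|E|\sim|K|/|E|$. So your approach is correct and is, in substance, what the cited reference contains.

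One caveat worth flagging: estimate (\ref{I2}) as printed involves $\nabla(v_K b_K^{1/2})$, but $b_K^{1/2}$ itself fails to lie in $H^1(K)$, since near an edge $b_K$ vanishes linearly and hence $|\nabla b_K^{1/2}|^2$ behaves like the reciprocal of the distance to that edge, which is not integrable in two dimensions. Your appeal to finite-dimensional norm equivalence for the functional $\bar v\mapsto\parallel\nabla(\bar v\,b_{\overline K}^{1/2})\parallel_{\overline K}$ therefore does not literally apply. This is a typographical slip in the paper rather than a flaw in your reasoning: in the proof of Theorem~\ref{bi} the authors set $w_K=r_K\cdot b_K$ (no square root) and invoke (\ref{I2}) in the form $\parallel\nabla(b_K r_K)\parallel_K\lesssim h_{\min,K}^{-1}\parallel r_K\parallel_K$. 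With $b_K$ in place of $b_K^{1/2}$ the product is polynomial, the reference-element seminorm is manifestly finite and continuous, and your argument goes through verbatim.
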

\begin{proof}
 Reference \cite{GK:2000}.
\end{proof}

\section{Examples of Finite elements }\label{Finite element}
\subsection{Crouzeix-Raviart elements I}
For a triangulation of $\Omega$ consisting of triangles in $2D$, we approximate the exact solution $u$ in the Crouzeix-Raviart finite 
element space \cite{ANJ:2001, CR:1973, GR:1986}, namely,
\begin{eqnarray*}
 V_h&:=& \left\{v_h\in L^2(\Omega): \mbox{   } {v_h}_{|K}\in \mathbb{P}^1(K), \forall K\in \cT_h, \int_E [v_h]_E=0 \forall  E\in\cE_h\right\}\cap H_0^1(\Omega^c).
\end{eqnarray*}
The bilinear form $J(.,.):V\cup V_h\longrightarrow \mathbb{R}$ is defined here as follows:
\begin{eqnarray}\label{J}
 J(u_h,v_h):= \displaystyle\sum_{E\in \cE_h(\Omega^m)}\frac{h_E}{h_{\min,E}^2}\int_E[u_h^m]_E\cdot[v_h^m]_E, \mbox{   } u_h,v_h\in V\cup V_h.
\end{eqnarray}
We are now able to define the norm on $V_h$:
\begin{eqnarray}\label{norm}
 \lVert v\lVert_h:=\left(\displaystyle\sum_{K\in\cT_h}|v_h^m|_{1,K}^2+\lVert v_h^c\lVert_{1,\Omega^c}^2+J(v_h^m,v_h^m)\right)^{1/2}.
\end{eqnarray}
These
Crouzeix-Raviart elements are nonconforming (i.e. $V_h\nsubseteq V$).
It is clear that the bilinear form $a_h$ is $\lVert \cdot\lVert_h$-coercive on $V_h$
independently of the aspect ratio of the element $K$ of the triangulation, which means that (H.2) is
valid. Since in this case we have $V_{\mbox{Cl}}^0=H_0^1(\Omega)\cap V_h$, the assumption (H.1) holds. In addition, the 
Crouzeix-Raviart elements satisfy the condition (CR) by definition.
\subsection{Crouzeix-Raviart elements II} Here we restrict to a triangulation of $\Omega$ made of rectangles. Due to the condition 
(H.1) we actually need to modify the finite element given in \cite{AN:2001,ANJ:2001}. On the reference rectangle $\bar{K}=(0,1)^2$ we define 
\begin{eqnarray}
 \bar{\mathbb{Q}}^{1+}:=\mbox{span}\{1,\bar{x},\bar{y},\bar{x}\bar{y},\bar{y}^2\}.
\end{eqnarray}
As degree of freedom (i.e. functionals of $\Sigma$) we take
\begin{eqnarray*}
 \bar{\theta}_i(q):=\int_{\bar{E}_i}q, \mbox{  } i=1,\ldots,4,\mbox{    } \bar{\theta}_5(q):=\int_{\bar{K}} \bar{q}_5q,
\end{eqnarray*}
where $\bar{E}_i$ are the four edges of $\bar{K}$, and $ \bar{q}_5$ is the polynomial defined by 
\begin{eqnarray}
 \bar{q}_5(\bar{x},\bar{y}):=3(2\bar{x}-1)(2\bar{y}-1).
\end{eqnarray}
One readily checks that the triplet $\left(\bar{K},\bar{\mathbb{Q}}^{1+},\{\bar{\theta}_i\}_{i=1}^5\right)$ 
is a finite element \cite[Page 75]{AE05} 
associated basis is given by $\{\bar{q}_i\}_{i=1}^5$, where
$$ \bar{q}_1(\bar{x},\bar{y}):=1-4\bar{y}+3\bar{y}^2, \mbox{   } \hspace*{1cm}
\bar{q}_2(\bar{x},\bar{y}):=-2\bar{y}+3\bar{y}^2, $$
$$\hspace*{1cm}\bar{q}_3(\bar{x},\bar{y}):=\frac{1}{2}-\bar{x}+3\bar{y}-3\bar{y}^2, \mbox{   } \hspace*{1cm}
\bar{q}_4(\bar{x},\bar{y}):=-\frac{1}{2}+\bar{x}+3\bar{y}-3\bar{y}^2.
$$
The edges are $\bar{E}_1=(0,1)\times \{0\},\bar{E}_2=(0,1)\times \{1\} , \bar{E}_3=\{0\}\times (0,1)$ and 
$\bar{E}_4=\{1\}\times (0,1)$.

The finite element $\left(K,\mathbb{Q}^{1+},\{\theta_i\}_{i=1}^5\right)$ on the actual anisotropic rectangle $K$ is obtained by 
a standard affine transformation from $\left(\bar{K},\bar{\mathbb{Q}}^{1+},\{\bar{\theta}_i\}_{i=1}^5\right)$ such that $\bar{y}$ is 
mapped onto the stretching direction of the rectangle.

The space $V_h$ is defined by 
\begin{eqnarray*}
 V_h&:=& \left\{v_h\in L^2(\Omega): \mbox{   } {v_h}_{|K}\in\mathbb{Q}^{1+} , \forall K\in \cT_h, \int_E [v_h]_E=0 \forall  E\in\cE_h\right\}\cap H_0^1(\Omega^c).
\end{eqnarray*}
The bilinear form $J(.,.):V\cup V_h\longrightarrow \mathbb{R}$ is defined as in (\ref{J}). The discrete norm $\lVert\cdot\lVert_h$ is also 
defined as in (\ref{norm}). The first condition (H.1) clearly holds: for rectangles, 
$V_{\mbox{Cl}}^0$ consists of continuous and piecewise bilinear functions. In addition, the last assumption (H.2) and the condition (CR) 
are satisfied trivially\cite{creuse:03}.
Note that the condition (H.1) is violated for $V_{\mbox{Apel}}\cap H_0^1(\Omega^c)$ (see T. Apel in \cite{AN:2001,ANJ:2001}) define by 
 \begin{eqnarray*}
  V_{\mbox{Apel}}:=\left\{v_h\in L^2(\Omega):  \mbox{     } 
  {v_h}_{|K}\in \mbox{span}\{1,x,y,y^2\} \mbox{  } \forall  K\in \cT_h, \mbox{  } \int_{E}[v_h]_E=0  \forall  E\in\cE_h\right\};
 \end{eqnarray*}
therefore we had to enlarge the discrete space $V_h$ 
(i.e. $V_{\mbox{Apel}}\cap H_0^1(\Omega^c)
\subset V_h$).

\subsection{Crouzeix-Raviart elements III} Here we make the same restriction as in the previous section, i.e. we consider a triangulation 
of $\Omega$ made of rectangles. For the previous element, the local space ${V_h}_{|K}$ depends on the stretching direction of the rectangle
$K$. Here we modify the element such that this dependence on the directionality is removed.

Consider the reference rectangle $\bar{K}=(0,1)^2$, set $\bar{\mathcal{P}}:=\mathbb{P}^2$, and define the degrees of freedom (with the same 
notation as before) by
\begin{eqnarray*}
 \bar{\theta}_i(q):=\int_{\bar{E}_i}q, \mbox{  } i=1,\ldots,4,\mbox{    } \bar{\theta}_5(q):=\int_{\bar{K}} \bar{q}_5q, 
 \mbox{    } \bar{\theta}_6(q):=\int_{\bar{K}}q,
\end{eqnarray*}
with $\bar{q}_5$ as above. One easily checks that the triplet $\left(\bar{K},\bar{\mathcal{P}},\{\bar{\theta}_i\}_{i=1}^6\right)$ is a finite 
element (cf. \cite[Page 75]{AE05}) whose associated basis is given by 
$\{\bar{q}_i\}_{i=1}^6$, with 

$$ \bar{q}_1(\bar{x},\bar{y}):=1-4\bar{y}+3\bar{y}^2, \mbox{   } \hspace*{1cm}
\bar{q}_2(\bar{x},\bar{y}):=-2\bar{y}+3\bar{y}^2, $$
$$\hspace*{1cm}\bar{q}_3(\bar{x},\bar{y}):=1-4\bar{x}+3\bar{x}^2, \mbox{   } \hspace*{1cm}
\bar{q}_4(\bar{x},\bar{y}):=-2\bar{x}+3\bar{x}^2.
$$
$$\hspace*{1cm}\bar{q}_5(\bar{x},\bar{y}):=(2\bar{x}-1)(2\bar{y}-1), \mbox{   } \hspace*{1cm}
\bar{q}_6(\bar{x},\bar{y}):=6(\bar{x}-\bar{x}^2+\bar{y}-\bar{y}^2)-1.
$$
On a stretched rectangle $K$ we take the finite element $\left(K,\mathbb{P}^2,\{\theta_i\}_{i=1}^6\right)$ obtained by a standard affine 
transformation from $\bar{K}$ to $K$, i.e. 
$q_i(x,y)=\bar{q}_i(\bar{x},\bar{y})$ and $\theta_i(q)=\bar{\theta}_i(\bar{q})$.
The Assumption \ref{ass2} (i.e. (H.1) and (H.2) conditions) with the Crouzeix-Raviart condition are satisfied trivially\cite{creuse:03},
where the discrete space $V_h$ is defined by 
\begin{eqnarray*}
 V_h&:=& \left\{v_h\in L^2(\Omega): \mbox{   } {v_h}_{|K}\in\mathbb{P}^2(K) , \forall K\in \cT_h, \int_E [v_h]_E=0 \forall  E\in\cE_h\right\}\cap
 H_0^1(\Omega^c),
\end{eqnarray*}
and the bilinear form (resp. the discrete norm)
$J(.,.)$ (resp. $\lVert\cdot\lVert_h$) are defined as above.
\subsection{$\mathbb{Q}^k$ elements ($H_0^1(\Omega)$-Conforme approximation)}
We finally present an element currently used in $hp$ finite element approximations of corner and/or edge singularities as well as boundary 
layers, and achieving robust exponential convergence. We consider either a $2D$ triangulation of $\Omega$ made of 
triangles or rectangles. The discrete space is defined  for $k\geqslant 2$ by 
\begin{eqnarray}
 V_h:=\left\{v_h\in H_0^1(\Omega):  v_h=0 \mbox{  on  } \partial \Omega_c, {v_h}_{|K}\in \mathcal{P}_K^k, \forall K\in \mathcal{T}_h\right\}
 \subset V,
\end{eqnarray}
where $\mathcal{P}_K^k=\mathbb{Q}^k(K)$ if $K$ is a rectangle and $\mathcal{P}_K^k=\mathbb{P}^k(K)$ if $K$ is a triangle.
The Assumption \ref{ass2} and Crouzeix-Raviart property (CR) are clearly satisfied by definition \cite{creuse:03}.
\section{Error estimators}\label{Estimatorssection}
In order to  solve the coupled problem (\ref{model})-(\ref{boundary}) by efficient adaptive finite element methods, reliable and efficient 
a posteriori error analysis is important to provide appropriated indicators. 
In this section, we first define the local and global indicators and then the lower and upper error  bounds 
are derived.
\subsection{Residual error estimator}
The general philosophy of residual error estimators is to estimate an appropriate norm of the correct residual by terms that 
can be  evaluated easier, and that involve the data at hand. To this end define the exact element residuals:
\begin{dfn}\normalfont(\textbf{Exact element residuals}) Let $v_h\in V_h$ be an arbitrary finite element function. The exat 
element residuals over a triangle or 
rectangle $K\in\cT_h$ and over face $E\subset \overline{\Omega^c}$ are defined by
\begin{eqnarray}
 R_K(v_h):=f^m+\dive (\mathbb{K}\nabla v_h^m)-\alpha_{ex}(v_h^m-v_h^c)\delta_y, 
\end{eqnarray}
%while for all $E\subset \bar{\Omega^c}$, we define
\begin{eqnarray}
 R_{E}(v_h):=f^c+\frac{d}{dx}\left(D\frac{dv_h^c}{dx}\right)+\alpha_{ex}({v_h^m}_{|y=0}-v_h^c),
\end{eqnarray}
respectively.

\end{dfn}
As it is common, these exact residuals are replaced by some finite-dimensional approximation called approximate element 
residual $r_{K}(v_h)$ and $r_{E}(v_h)$:
$$ r_{K}(v_h)\in\mathcal{P}_K^k \mbox{  on  } K\in \mathcal{T}_h \mbox{  and   } 
 r_{E}(v_h) \in\mathcal{P}_E^r\mbox{  on   }  E\subset \overline{\Omega^c};  (k,r)\in\mathbb{N}^2.
$$
This approximation is here  achieved by  projecting 
 $f^c$ on the space of piecewise constant functions in $\Om^c$
and picewise $\mathcal{P}_{K}^1$ functions  in $\Om^m$ for $f^m$, more precisely 
for each $E\subset\overline{\Omega^c}$, we take
\[
f^c_{E}=\frac{1}{|E|}\int_E f^c(\tau)d\tau,
\]
and for all $K\in\cT_h$ we take $f^m_K$ as the unique element of $\mathcal{P}_{K}^1$
such that
\[
\int_Kf^m_K(x,y)q(x,y)dxdy=\int_K f^m(x,y) q(x,y)dxdy, \forall q\in \mathcal{P}_{K}^1.
\]
We recall that $\mathcal{P}_{K}^k=\mathbb{P}^k(K)$ if $K$ is triangle, with for $E\in\cE(K)$  $\mathcal{P}_{E}^r=\mathbb{P}^r(E)$. Also 
$\mathcal{P}_{K}^k=\mathbb{Q}^k(K)$ if $K$ is rectangle, where for  $E\in\cE(K)$  $\mathcal{P}_{E}^r=\mathbb{Q}^r(E)$; $(k,r)\in\mathbb{N}^2$.

Thereby, we define the approximate element residuals.
\begin{dfn}\normalfont(\textbf{Approximate element residuals})
  Let $v_h\in V_h$ be an arbitrary finite element function. The approximate element residuals  are defined by
  \begin{eqnarray}
   r_{K}(v_h):=f_K^m+\dive (\mathbb{K}\nabla v_h^m)-\alpha_{ex}(v_h^m-v_h^c)\delta_y, \forall K\in\cT_h,
  \end{eqnarray}
  and 
%whereas, for $E\subset\overline{\Omega^c}$, we have,

\begin{eqnarray}
 r_{E}(v_h):=f^c_{E}+\frac{d}{dx}\left(D\frac{dv_h^c}{dx}\right)+
 \alpha_{ex}({v_h^m}_{|y=0}-v_h^c), \forall E\subset\overline{\Omega^c}.
\end{eqnarray}
\end{dfn}
We can now define the residual error estimators.
\begin{dfn}
 \normalfont (\textbf{Residual error estimators})
 For a conforming discretization, the  local residual error estimators are defined by
 \begin{eqnarray}\nonumber\label{conformingestimator}
  \Theta_K^2(u_h)&:=&h_{\min,K}^2\lVert r_K(u_h)\lVert_K^2+\displaystyle\sum_{E\in\cE_h
  (\partial K\cap \Omega^m)}\frac{h_{\min,K}^2}{h_E}
  \lVert[\mathbb{K}\nabla u_h^m\cdot \textbf{n}_E]_E\lVert_E^2\\
  &+&
  \displaystyle\sum_{E\in\cE_h(\partial K\cap \overline{\Omega^c})}\frac{h_{\min,K}^2}{h_E}
  \lVert r_{E}(u_h)\lVert_E^2.
 \end{eqnarray}
 For a non-conforming discretization, we set,
 \begin{eqnarray}\nonumber\label{thetanc}
  \Theta_K^2(u_h)&:=&h_{\min,K}^2\lVert r_K(u_h)\lVert_K^2+\displaystyle\sum_{E\in\cE_h
  (\partial K\cap \Omega^m)}\frac{h_{\min,K}^2}{h_E}
  \lVert[\mathbb{K}\nabla u_h^m\cdot \textbf{n}_E]_E\lVert_E^2\\
  &+&
  \displaystyle\sum_{E\in\cE_h(\partial K\cap \overline{\Omega^c})}\frac{h_{\min,K}^2}{h_E}
  \lVert r_{E}(u_h)\lVert_E^2\\\nonumber
  &+&\displaystyle\sum_{E\in\cE_h(\partial K\cap \Omega^m)}\frac{h_E}{h_{\min,K}^2}
  \lVert [u_h^m]_E\lVert_E^2.
 \end{eqnarray}

\end{dfn}
The global residual error estimator is given by 
\begin{eqnarray}
 \Theta (u_h)&:=& \left(\displaystyle\sum_{K\in \cT_h}\Theta_K(u_h)^2\right)^{1/2}.
\end{eqnarray}
Furthermore denote the local and global approximation terms by 
\begin{eqnarray*}\label{termeapproximationlocal}
 \zeta_{K}^2:=h_{\min,K}^2 \lVert R_K(u_h)-r_K(u_h)\lVert_K^2+
 \displaystyle\sum_{E\in\cE_h(\partial K\cap \overline{\Omega^c})}\frac{h_{\min,K}^2}{h_E}
 \rVert R_{E}(u_h)-r_{E}(u_h)\rVert_E^2.
\end{eqnarray*}
and
\begin{eqnarray}\label{termeapproximationglobal}
 \zeta &:=&\left(\displaystyle\sum_{K\in\mathcal{T}_h}\zeta_K^2\right)^{1/2}.
\end{eqnarray}
\begin{rmq}
 The residual character of each term on the right-hand sides of (\ref{conformingestimator}) and (\ref{thetanc}) is quite clear since if 
 $u_h$ would be the exact solution of (\ref{FF}), then they would vanish.
\end{rmq}

%Finally 
%  the global function   $\textbf{f}_h$ is defined by
%\[
%\bff_h= \bff_T \hbox{ in } T,  \forall T\in \mathcal{T}_h.
%\]

\subsection{Proof of the lower error bound}
 To prove local efficiency for $\omega\subset \Omega$ and $v\in V\cup V_h$, let us denote by 
 \begin{eqnarray}
  \lVert v\lVert_{h,\omega}^2:=\displaystyle\sum_{K\subset \bar{\omega}\cap\bar{\Omega}^m}\lVert v^m\lVert_{1,K}^2+\lVert v^c\lVert_{1,\bar{\omega}\cap 
  \bar{\Omega}^c}^2
  +\displaystyle\sum_{K\subset \bar{\omega}}J_K(v^m,v^m),
 \end{eqnarray}
where
$$J_K(v^m,v^m):=\displaystyle\sum_{E\in\cE_h(\partial \Omega^m\cap\partial K)}\frac{h_E}{h_{\min,E}^2}\cdot\lVert [v^m]_E\lVert_E^2,$$
for non-conforming discretization, and we set 
\begin{eqnarray}
  \lVert v\lVert_{h,\omega}^2:=\lVert v^m\lVert_{1,\omega\cap\bar{\Omega}^m}^2+\lVert v^c\lVert_{1,\omega\cap 
  \bar{\Omega}^c}^2,
\end{eqnarray}
for conforming discretization.   

The error estimator $\Theta(u_h)$ is consider efficient if it satisfies the following theorem:
\begin{thm}(\textbf{Local lower error bound})\label{bi}
 Let $u\in V$ be the exact solution and $u_h\in V_h$ be the finite element solution. Assume that the 
 Assumption \ref{ass1} holds. Then, the error is bounded locally from below for all $K\in\cT_h$ by
 \begin{eqnarray}\label{Local}
  \Theta_K(u_h)&\lesssim& \rVert u-u_h\rVert_{h,\tilde \omega_K}+
  \displaystyle\sum_{K'\subset \tilde \omega_K}\zeta_{K'},
 \end{eqnarray}
where $\tilde \omega_K$ is a finite union of neighbording elements of $K$.
\end{thm}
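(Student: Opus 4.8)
The plan is to establish the bound separately for each residual contribution in $\Theta_K(u_h)$, using the bubble-function technique of Verf\"urth in the anisotropic form of Kunert; the inverse inequalities (\ref{I1})--(\ref{I5}) are what guarantee that each argument reproduces \emph{exactly} the anisotropic weights $h_{\min,K}$ and $h_{E,K}$ occurring in the estimator. A key structural point is that only the continuous weak formulation (\ref{FF}) and the bubble functions (which lie in $V\cap H_0^1(\Omega)$) are used; no Cl\'ement operator enters, so the alignment measure $m_1$ does not appear and the resulting constant is uniform in the aspect ratio. In every case the residual equation is obtained by testing (\ref{FF}) against a locally supported bubble and integrating by parts elementwise, while replacing the exact residual by the approximate one produces precisely the approximation terms $\zeta_{K'}$.

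First I would treat the matrix element residual $h_{\min,K}\|r_K(u_h)\|_K$. Choosing the test function $w_K:=r_K(u_h)\,b_K$, extended by zero outside $K$, equivalence (\ref{I1}) gives $\|r_K(u_h)\|_K^2\sim\int_K r_K(u_h)\,w_K$. Elementwise integration by parts (the boundary terms vanish because $b_K=0$ on $\partial K$) together with the exact equation $a(u,w_K)=F(w_K)$ rewrites this as $\int_K\mathbb{K}\nabla(u^m-u_h^m)\cdot\nabla w_K-\int_K\big(R_K(u_h)-r_K(u_h)\big)w_K$. Cauchy--Schwarz, the gradient bound (\ref{I2}) with factor $h_{\min,K}^{-1}$, and $\|w_K\|_K\lesssim\|r_K(u_h)\|_K$ then yield, after dividing by $\|r_K(u_h)\|_K$ and multiplying by $h_{\min,K}$, the estimate $h_{\min,K}\|r_K(u_h)\|_K\lesssim\|u-u_h\|_{h,K}+\zeta_K$.

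Next, for the gradient jump $\tfrac{h_{\min,K}}{h_E^{1/2}}\|[\mathbb{K}\nabla u_h^m\cdot\textbf{n}_E]_E\|_E$ on matrix edges I would use the edge bubble. With $w_E:=F_{\mbox{ext}}\big([\mathbb{K}\nabla u_h^m\cdot\textbf{n}_E]_E\big)\,b_E$ supported in $w_E=K_1\cup K_2$, equivalence (\ref{I3}) controls $\|[\mathbb{K}\nabla u_h^m\cdot\textbf{n}_E]_E\|_E^2$ from above by $\int_E[\mathbb{K}\nabla u_h^m\cdot\textbf{n}_E]_E\,w_E$. Integrating by parts over $K_1$ and $K_2$ converts this edge integral into volume integrals involving $\dive(\mathbb{K}\nabla u_h^m)$ --- hence the already-estimated element residual --- and $\mathbb{K}\nabla(u^m-u_h^m)\cdot\nabla w_E$; the inverse inequalities (\ref{I4}) and (\ref{I5}) supply the factors $h_{E,K}^{1/2}$ and $h_{E,K}^{1/2}h_{\min,K}^{-1}$, and (\ref{Hmaillage2}) lets one pass freely between face- and element-related quantities. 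Rearranging gives the claimed bound with the element-residual and approximation terms absorbed into $\|u-u_h\|_{h,\tilde\omega_K}+\sum_{K'\subset\tilde\omega_K}\zeta_{K'}$. The conduit term $\tfrac{h_{\min,K}}{h_E^{1/2}}\|r_E(u_h)\|_E$ is handled by the same scheme with one-dimensional bubbles on the conduit edges $E\subset\overline{\Omega^c}$, integration by parts in the conduit equation, and the exchange terms carried consistently through the coupling part of $a(\cdot,\cdot)$.

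Finally, in the nonconforming case the jump term $\tfrac{h_E^{1/2}}{h_{\min,K}}\|[u_h^m]_E\|_E$ requires no bubble argument at all: since $u\in V\subset H_0^1(\Omega)$ its matrix trace is single-valued, so $[u^m]_E=0$ and hence $[u_h^m]_E=[(u_h-u)^m]_E$. Using $h_{\min,K}\sim h_{\min,E}$ from Assumption \ref{ass1}, the quantity $\tfrac{h_E}{h_{\min,K}^2}\|[u_h^m]_E\|_E^2$ is, up to constants, exactly the jump contribution $J_K(u-u_h,u-u_h)$ already present in $\|u-u_h\|_{h,\omega}$, so this term is dominated directly. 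Summing the four estimates over the finitely many elements and edges of $\tilde\omega_K$ --- the finiteness being guaranteed by Assumption \ref{ass1} --- yields (\ref{Local}). The main obstacle is precisely the anisotropic bookkeeping: unlike the isotropic situation one may not use the isotropic scalings $h_K$, and each bubble argument must recover the sharp weights $h_{\min,K}$ and $h_{E,K}$; this is exactly what the tailored inverse inequalities (\ref{I1})--(\ref{I5}) and the mild mesh regularity (\ref{Hmaillage1})--(\ref{Hmaillage2}) are engineered to deliver, ensuring the hidden constant is independent of the element aspect ratio.
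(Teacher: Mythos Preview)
Your proposal is correct and follows essentially the same route as the paper: the four contributions to $\Theta_K(u_h)$ are bounded one at a time by the Verf\"urth/Kunert bubble-function technique with the anisotropic inverse inequalities (\ref{I1})--(\ref{I5}), and the nonconforming jump term is controlled directly via $[u^m]_E=0$ and the definition of $J_K$. The only detail you glossed over is that, for elements $K$ touching the conduit, the elementwise integration-by-parts identity also produces an exchange contribution $\alpha_{ex}\int_{K\cap\Omega^c}[(u^m-u_h^m)-(u^c-u_h^c)]\,w_K$ (and similarly in the jump and conduit steps), which the paper bounds by $\|u-u_h\|_{h,K}$ in the obvious way; this is routine and does not affect your argument.
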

\begin{proof}
 We begin by bounding each the residuals separately. \\\\
 $\bullet$ \textbf{ Element residual in $\Omega^m$:}
 We start with the norm $\lVert r_K(u_h)\lVert_K$ of the element residual $r_K=
 r_K(u_h):=f_K^m+\dive (\mathbb{K}\nabla u_h^m)-\alpha_{ex}(u_h^m-u_h^c)\delta_y$. Since we use linear or bilinear polynomial functions, 
 $r_K\in\mathcal{P}_K^k$ holds for certain $k\in\mathbb{N}$. For $\textbf{x}\in K$ let 
 \begin{eqnarray}
  w_K(\textbf{x}):=r_K(u_h)(\textbf{x})\cdot b_K(\textbf{x})\in H_0^1(K),
 \end{eqnarray}
where the element bubble function $b_K$ is from Section \ref{bulle}. Integration by parts yields
\begin{eqnarray*}
 \int_Kr_K\cdot w_K&=&\int_K[\dive(\mathbb{K}\nabla u_h^m)+\alpha_{ex}(u_h^m-u_h^c)\delta_y+f^m]\cdot w_K+\int_K(f_K^m-f^m)\cdot w_K\\
 &=&-\int_K(\mathbb{K}\nabla u_h^m)\cdot\nabla w_K+\int_K f^m\cdot w_K+\int_{K\cap\Omega^c}[\alpha_{ex}(u_h^m-u_h^c)\delta_y]\cdot w_K\\
 &+&\int_K(f_K^m-f^m)\cdot w_K
\end{eqnarray*}
We use the weak formulation (\ref{FF}) to obtain,
\begin{eqnarray*}
 \int_Kr_K\cdot w_K&=&
 \int_K[\mathbb{K}\nabla (u^m-u_h^m)]\cdot\nabla w_K\\
 &-&\int_{K\cap\Omega^c}\alpha_{ex}[(u^m-u_h^m)-(u^c-u_h^c)]\delta_y\cdot w_K\\
 &+&\int_K(f_K^m-f^m)\cdot w_K
\end{eqnarray*}
Hence
\begin{eqnarray*}
 \int_Kr_K\cdot w_K &\leqslant&
 \lVert \mathbb{K}\nabla (u^m-u_h^m)\lVert_K\cdot\lVert \nabla w_K\lVert_K\\
 &+&\alpha_{ex}\left(\lVert u^m-u_h^m\lVert_{K\cap\Omega^c}+\lVert u^c-u_h^c 
 \lVert_{K\cap\Omega^c}\right)\cdot\lVert w_K\lVert_{K\cap\Omega^c}\\
 &+&
 \lVert R_K(u_h)-r_K(u_h)\lVert_K\cdot \lVert w_K\lVert_K
\end{eqnarray*}
Recalling (\ref{I1}), (\ref{I2}), and $0\leqslant b_K\leqslant 1$ gives the following bounds, 
\begin{eqnarray*}
 \left|\int_Kr_K\cdot w_K\right|&=&\lVert b_K^{1/2}\cdot r_K\lVert_K^2\sim \lVert r_K\lVert_K^2\\
 \lVert\nabla w_K\lVert_K&=&\lVert\nabla (b_K\cdot r_K)\lVert_K\lesssim h_{\min,K}^{-1}\cdot\lVert r_K\lVert_K\\
 \lVert w_K\lVert_K&=&\lVert b_K\cdot r_K\lVert_K\leqslant \lVert r_K\lVert_K,
\end{eqnarray*}
that result in 
\begin{eqnarray}\label{bi1}
 h_{\min,K}^2\cdot\lVert r_K\lVert_K^2&\lesssim&\lVert u-u_h\lVert_{h,K}^2+\zeta_K^2.
\end{eqnarray}
$\bullet$ \textbf{ Normal jump in $\Omega^m$:} Now we aim at a bound of the term\\ $\displaystyle\sum_{E\in\cE_h
  (\partial K\cap \Omega^m)}\frac{h_{\min,K}^2}{h_E}
  \lVert[\mathbb{K}\nabla u_h^m\cdot \textbf{n}_E]_E\lVert_E^2$ of the gradient jump across some inner face 
  $E\subset\Omega^m$. We fix $E\in \cE_h
  (\Omega^m)$. Since we use linear or bilinear polynomial functions,\\ $[\mathbb{K}\nabla u_h^m\cdot \textbf{n}_E]_E\in\mathcal{P}_E^r$ 
  holds for certain $r\in\mathbb{N}$. Let 
  $K_1$ and $K_2$ be the two elements that $E$ belongs to. The right hand side 
  $f^m=\alpha_{ex}(u^m-u^c)\delta_y-\dive (\mathbb{K}\nabla u^m)$ is assumed to be in $L^2(\Omega^m)$. Integration by parts yields for any 
  function $w_E\in H_0^1(W_E)$
  \begin{eqnarray*}
   0&=&-\int_{w_E}(\mathbb{K}\nabla u^m)\cdot \nabla w_E-\int_{w_E\cap\Omega^c}\alpha_{ex}(u^m-u^c)\delta_y\cdot w_E\\
   &+&\int_{w_E}f^m\cdot w_E\\
   -\int_Ew_E\cdot [\mathbb{K}\nabla u_h^m\cdot \textbf{n}_E]_E&=&
   \displaystyle\sum_{i=1}^2\int_{\partial K_i}w_E\cdot (\mathbb{K}\nabla u_h^m\cdot \textbf{n}_E)\\
   &=&\displaystyle\sum_{i=1}^2\left(\int_{K_i}\mathbb{K}\nabla u_h^m\cdot\nabla w_E+
   \int_{K_i}w_E\cdot\dive (\mathbb{K}\nabla u_h^m)\right)\\
   &=&
   \displaystyle\sum_{i=1}^2\left(\int_{K_i}\mathbb{K}\nabla u_h^m\cdot\nabla w_E-\int_{K_i}f_{K_i}^m\cdot w_E\right.\\
  &+&\left. \alpha_{ex}\int_{K_i\cap\Omega^c}
   (u_h^m-u_h^c)\delta_y\cdot w_E\right)\\
   &=&
   \displaystyle\sum_{i=1}^2\left(-\int_{K_i}\mathbb{K}\nabla (u^m-u_h^m)\cdot\nabla w_E\right.\\
  &-&\left. \alpha_{ex}\int_{K_i\cap\Omega^c}
   [(u^m-u_h^m)-(u^c-u_h^c)]\delta_y\cdot w_E\right.\\
   &+&\left.\int_{K_i}(f^m-f_{K_i}^m)\cdot w_E
   \right).
   \end{eqnarray*}
Let now the function $w_E\in H_0^1(w_E)$ be defined by 
\begin{eqnarray}
 w_E:=F_{ext}(-[\mathbb{K}\nabla u_h^m\cdot \textbf{n}_E]_E)\cdot b_E,
\end{eqnarray}
with $F_{ext}$ being the extension operator of Section \ref{bulle}, and $b_E$ being the face bubble function.
Because  of ${w_E}_{|E}={[\mathbb{K}\nabla u_h^m\cdot \textbf{n}_E]_E\cdot b_E}_{|E}$, we conclude 
\begin{eqnarray*}
 \left\lVert[\mathbb{K}\nabla u_h^m\cdot \textbf{n}_E]_E\cdot b_E^{1/2} \right\lVert_E^2\lesssim
 \displaystyle\sum_{i=1}^2\left\{
 \left\lVert \mathbb{K}\nabla (u^m-u_h^m)\lVert_{K_i}\cdot\lVert\nabla w_E\right\lVert_{K_i}\right.\\
 \left.+\left(\alpha_{ex}\lVert u^m-u_h^m\lVert_{K_i\cap\Omega^c}+\lVert u^c-u_h^c\lVert_{K_i\cap\Omega^c}\right)\cdot
 \lVert w_E\lVert_{K_i\cap\Omega^c}
 +\lVert f^m-f_{K_i}^m \lVert_{K_i}\cdot\lVert w_E\lVert_{K_i}\right\}.
\end{eqnarray*}
The function $w_E$ is piecewise cubic on $K_1\cup K_2$. The equivalence relations (\ref{I3})-(\ref{I4}) imply 
\begin{eqnarray*}
\int_Ew_E\cdot [\mathbb{K}\nabla u_h^m\cdot \textbf{n}_E]_E&=&
\int_E[\mathbb{K}\nabla u_h^m\cdot \textbf{n}_E]_E^2\cdot b_E\\
&=&\lVert[\mathbb{K}\nabla u_h^m\cdot \textbf{n}_E]_E\cdot b_E^{1/2} \lVert_E^2\\
&\sim& \lVert[\mathbb{K}\nabla u_h^m\cdot \textbf{n}_E]_E\lVert_E^2\\
 \left\lVert \nabla (F_{\mbox{ext}}([\mathbb{K}\nabla u_h^m\cdot \textbf{n}_E]_E)\cdot b_E)\right\lVert_{K_i}      &=&\lVert\nabla (w_E)\lVert_{K_i}\\
&\sim& h_E^{1/2}h_{\min,K_i}^{-1}\cdot \lVert[\mathbb{K}\nabla u_h^m\cdot \textbf{n}_E]_E\lVert_E\\
 \left\lVert F_{\mbox{ext}}([\mathbb{K}\nabla u_h^m\cdot \textbf{n}_E]_E)\cdot b_E\right\lVert_{K_i}&=&\lVert w_E\lVert_{K_i}
\sim h_E^{1/2}\cdot \lVert[\mathbb{K}\nabla u_h^m\cdot \textbf{n}_E]_E\lVert_E
\end{eqnarray*}
and subsequently lead to
\begin{eqnarray*}
 \lVert[\mathbb{K}\nabla u_h^m\cdot \textbf{n}_E]_E\lVert_E^2\lesssim 
 \displaystyle\sum_{i=1}^2\left\{
 \lVert \mathbb{K}\nabla (u^m-u_h^m)\lVert_{K_i}\cdot h_E^{1/2}h_{\min,K_i}^{-1}
 \lVert[\mathbb{K}\nabla u_h^m\cdot \textbf{n}_E]_E\lVert_E\right.\\
 \left.+\left(\alpha_{ex}\lVert u^m-u_h^m\lVert_{K_i\cap\Omega^c}+\lVert u^c-u_h^c\lVert_{K_i\cap\Omega^c}\right)\cdot
 h_E^{1/2}\lVert[\mathbb{K}\nabla u_h^m\cdot \textbf{n}_E]_E\lVert_E\right.\\
 \left.+\lVert f^m-f_{K_i}^m\lVert_{K_i}\cdot h_E^{1/2}\lVert[\mathbb{K}\nabla u_h^m\cdot \textbf{n}_E]_E\lVert_E
 \right\}.
\end{eqnarray*}
The dimensions $h_E\sim h_{E,K_i}$ and $h_{\min,K_i}$ cannot change rapidly for adjacent element. Thereby,
\begin{eqnarray*}
 \lVert[\mathbb{K}\nabla u_h^m\cdot \textbf{n}_E]_E\lVert_E\lesssim 
 \frac{h_E^{1/2}}{h_{\min,E}} \cdot 
 \left\{\lVert\nabla (u^m-u_h^m) \lVert_{W_E}\right.\\
 \left.+ \left(\alpha_{ex}\lVert u^m-u_h^m\lVert_{K_i\cap\Omega^c}+\lVert u^c-u_h^c\lVert_{K_i\cap\Omega^c}\right)\cdot
 h_{\min,E}+\lVert f^m-f_{K_i}^m\lVert_{K_i}\cdot h_{\min,E}
 \right\}.
\end{eqnarray*}

For a fixed element $K=K_1$ we sum up over all (inner) faces $E\in \cE_h(\Omega^m\cap K)$ and obtain,
\begin{eqnarray}\label{bi2}
\displaystyle\sum_{E\in\cE_h
  (\partial K\cap \Omega^m)}\frac{h_{\min,K}^2}{h_E}
  \lVert[\mathbb{K}\nabla u_h^m\cdot \textbf{n}_E]_E\lVert_E^2\lesssim 
\lVert u-u_h \lVert_{h,W_K}^2+\displaystyle\sum_{K'\subset W_K}\zeta_{K'}^2.
\end{eqnarray}
$\bullet$ \textbf{ Element residual in $\Omega^c$:} Let $K\in\cT_h$. Next the term $\frac{h_{\min,K}^2}{h_E}
  \lVert r_{E}(u_h)\lVert_E^2$ for a face $E\in\cE_h(\partial K\cap \overline{\Omega^c})$ of the pipe-flow region boundary 
  is to be bounded. Let $E\subset \overline{\Omega^c}$ and we design by 
  $K_1$ and $K_2$ be the two elements that $E$ belongs to (i.e. $W_E=K_1\cup K_2$). 
   Since we use linear or bilinear ansatz functions, \\$r_E=r_{E}(u_h):=f^c_{E}+\frac{d}{dx}\left(D\frac{du_h^c}{dx}\right)+
 \alpha_{ex}({u_h^m}_{|y=0}-u_h^c)$, 
 $r_E\in\mathcal{P}_E^r$ holds for some $r\in\mathbb{N}$.
  We set:
  \begin{eqnarray}
   w_E:=(F_{\mbox{ext}} (r_E(u_h))\cdot b_E),
  \end{eqnarray}
  and we assume that $w_E=0$ on $\overline{\Omega^m}\backslash \Omega^c$.
We use the weak formulation (\ref{FF}) to obtain,
\begin{eqnarray*}
 0=\int_{w_E\cap \Omega^c} D\frac{du^c}{dx}\cdot\frac{dw_E}{dx}-\alpha_{ex}\int_{\Omega^c\cap W_E}(u^m-u^c)\cdot w_E-\int_{W_E} f^c\cdot w_E.
\end{eqnarray*}
Integration by parts yields to 
\begin{eqnarray*}
 \int_E r_E\cdot w_E&=&\int_{W_E} f_E^c\cdot w_E+\int_{W_E}\frac{d}{dx}\left(D\frac{du_h^c}{dx}\right)\cdot w_E+\alpha_{ex}\int_{W_E\cap \Omega^c}
 (u_h^m-u_h^c)\cdot w_E\\
 &=&\int_{W_E} f_E^c\cdot w_E-\int_{W_E}D\frac{du_h^c}{dx}\cdot \frac{dw_E}{dx}+\alpha_{ex}\int_{W_E\cap \Omega^c}
 (u_h^m-u_h^c)\cdot w_E\\
 &=&\int_{W_E}-(f^c-f_E^c)\cdot w_E-\int_{W_E}D\frac{d(u^c-u_h^c)}{dx}\cdot \frac{dw_E}{dx}\\
 &-&\alpha_{ex}\int_{W_E\cap \Omega^c}
 [(u^m-u_h^m)-(u^c-u_h^c)]\cdot w_E\\
\end{eqnarray*}
Because ${w_E}_{|E}={r_E\cdot b_E}_{|E}$ we conclude by Cauchy-Schwarz inequality,
\begin{eqnarray*}
 \int_Er_E^2\cdot b_E&\leqslant&\lVert f^c-f_E^c\lVert_{W_E}\cdot \lVert w_E\lVert_{W_E}+D\left\lVert \frac{d(u^c-u_h^c)}{dx}\right\lVert_{W_E}
 \cdot \left\lVert\frac{dw_E}{dx}\right\lVert_{W_E}\\
 &+&\alpha_{ex}\left(\lVert u^m-u_h^m\lVert_{W_E\cap \Omega^c}+\lVert u^c-u_h^c\lVert_{W_E\cap \Omega^c}\right)\cdot
 \lVert w_E\lVert_{W_E\cap \Omega^c}.
\end{eqnarray*}
We fix $K=K_1$. Thus by inverse inequalities (\ref{I3})-(\ref{I5}), we deduce the estimation:
\begin{eqnarray}\label{bi3}
 \displaystyle\sum_{E\in\cE_h(\Omega^c\cap K)}\frac{h_{\min,K}^2}{h_E}
  \lVert r_{E}(u_h)\lVert_E^2\lesssim\lVert u-u_h \lVert_{h,W_K}^2+\displaystyle\sum_{K'\subset W_K}\zeta_{K'}^2.
\end{eqnarray}
$\bullet$ \textbf{ Nonconforming element:} It remains now to estimate the local indicator 
$\displaystyle\sum_{E\in\cE_h(\partial K\cap \Omega^m)}\frac{h_E}{h_{\min,K}^2}
  \lVert [u_h^m]_E\lVert_E^2$. Because the jump of $u\in H_0^1(\Omega)$ is zero through all the edges of $\Omega$, we clearly have 
  \begin{eqnarray}\nonumber\label{bi4}
   \displaystyle\sum_{E\in\cE_h(\partial K\cap \Omega^m)}\frac{h_E}{h_{\min,K}^2}
  \lVert [u_h^m]_E\lVert_E^2\leqslant J_K(u_h^m,u_h^m)&=&J_K(u^m-u_h^m,u^m-u_h^m)\\
  &\leqslant&\lVert u^m-u_h^m\lVert_{h,K}^2.
  \end{eqnarray}
Summarising, the estimates (\ref{bi1}), (\ref{bi2}), (\ref{bi3}) and (\ref{bi4}) provide the desired local lower error bound of Theorem \ref{bi}.
  %-\dive (\mathbb{K}\nabla u^m)& &=& &-\alpha_{ex}(u^m-u^c)\delta_y+f^m
\end{proof}

\subsection{Proof of the upper error bound}
The main result of this subsection can be stated as follows. 
\begin{thm}\label{bs}
 %\normalfont 
 (\textbf{Upper error bound-conforming case}) Assume a conform discretization (i.e. $V_h\subset V$).
 Let $u\in V$ be the exact solution and $u_h\in V_h$ be the finite element solution. Assume that the 
 Assumptions \ref{ass1} et \ref{ass2} hold. Then the  error is bounded globally from above by
 \begin{eqnarray}\label{bornesup}
  \rVert u-u_h\rVert_h&\lesssim& m_1(u-u_h,\cT_h)\cdot\left[\Theta(u_h)^2+\zeta^2\right]^{1/2}.
 \end{eqnarray}
\end{thm}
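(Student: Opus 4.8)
The plan is to combine coercivity with Galerkin orthogonality and then to replace the second argument of the bilinear form by a Cl\'ement interpolation error, so that the anisotropic interpolation estimates of Lemma \ref{Interpolationanisotrope} can be brought to bear. Set $e:=u-u_h$. Since the discretization is conforming, $V_h\subset V$, the penalty term $J$ is absent and $a_h$ coincides with $a$; hence (\ref{FFh}) reduces to $a(u_h,v_h)=F(v_h)$ for all $v_h\in V_h$, and subtracting this from (\ref{FF}) gives the orthogonality $a(e,v_h)=0$ for every $v_h\in V_h$. The elementwise coercivity (\ref{coercivem})--(\ref{coercivec}) makes $a$ coercive in $\|\cdot\|_h$, so $\|e\|_h^2\lesssim a(e,e)$. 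First I would use orthogonality together with the inclusion $I_{\mbox{Cl}}^0 e\in V_{\mbox{Cl}}^0\subset V_h$ (which is precisely assumption (H.1) of Assumption \ref{ass2}) to write
\[
\|e\|_h^2\lesssim a(e,e)=a(e,e-I_{\mbox{Cl}}^0 e)=F(\varphi)-a(u_h,\varphi),\qquad \varphi:=e-I_{\mbox{Cl}}^0 e .
\]

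Next I would produce the residual representation. Writing $a(u_h,\varphi)=a_m(u_h,\varphi)+a_c(u_h,\varphi)$ and integrating by parts elementwise on each $K\in\cT_h$, the volume term $\int_K\mathbb{K}\nabla u_h^m\cdot\nabla\varphi^m$ yields $-\int_K\dive(\mathbb{K}\nabla u_h^m)\varphi^m$ plus boundary contributions that, after summation over the mesh and using $\varphi\in H_0^1(\Omega)$, collapse into the interior normal-gradient jumps $\int_E[\mathbb{K}\nabla u_h^m\cdot\textbf{n}_E]_E\varphi^m$. Performing the analogous manipulation on the conduit term $\int_0^L D\,(du_h^c/dx)(d\varphi^c/dx)$ and collecting the three $\alpha_{ex}$ coupling integrals of $a_c$ (which reconstitute exactly the $\delta_y$ contribution in $R_K$ and the trace term $u_h^m|_{y=0}$ in $R_E$), one reaches
\[
a(e,\varphi)=\sum_{K\in\cT_h}(R_K(u_h),\varphi^m)_K+\sum_{E\subset\Omega^m}([\mathbb{K}\nabla u_h^m\cdot\textbf{n}_E]_E,\varphi^m)_E+\sum_{E\subset\overline{\Omega^c}}(R_E(u_h),\varphi^c)_E .
\]
I would then split each exact residual into its computable part plus its defect, $R_K=r_K+(R_K-r_K)$ and $R_E=r_E+(R_E-r_E)$, the latter being absorbed into $\zeta$ via (\ref{termeapproximationglobal}).

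The final step bounds each sum by a weighted Cauchy--Schwarz inequality matched to the weights in $\Theta_K$ and $\zeta_K$, and then invokes the anisotropic Cl\'ement estimates. For the volume term I would write $\sum_K(r_K,\varphi^m)_K\le\big(\sum_K h_{\min,K}^2\|r_K\|_K^2\big)^{1/2}\big(\sum_K h_{\min,K}^{-2}\|\varphi^m\|_K^2\big)^{1/2}$ and control the second factor by (\ref{cle1}), which supplies $m_1(e,\cT_h)\|\nabla e\|_\Omega$; for the edge terms I would use the reciprocal weights $h_{\min,E}^2/h_E$ and $h_E/h_{\min,E}^2$ so that (\ref{cle2}) again produces the factor $m_1(e,\cT_h)\|\nabla e\|_\Omega$. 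Summing the three contributions and the $\zeta$-defects gives $a(e,\varphi)\lesssim m_1(e,\cT_h)\,\|\nabla e\|_\Omega\,[\Theta(u_h)^2+\zeta^2]^{1/2}$. Combining this with $\|e\|_h^2\lesssim a(e,\varphi)$ and the elementary bound $\|\nabla e\|_\Omega\lesssim\|e\|_h$, I would cancel one power of $\|e\|_h$ to arrive at (\ref{bornesup}).

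I expect the main obstacle to be the careful bookkeeping of the $\delta_y$ coupling term on the interface $\{y=0\}$. Because this term simultaneously feeds the matrix residual $R_K$ through $-\alpha_{ex}(u_h^m-u_h^c)\delta_y$ and the conduit residual $R_E$ through the trace $u_h^m|_{y=0}$, the integration-by-parts accounting of the three $\alpha_{ex}$ integrals in $a_c$ must be matched, with consistent signs, against exactly these two residuals, and one must verify that the interface edges are not double counted between $\cE_h(\Omega^m)$ and $\cE_h(\overline{\Omega^c})$. Once this interface matching is confirmed, the remaining arguments are the standard weighted duality estimates.
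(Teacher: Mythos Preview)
Your proposal is correct and follows essentially the same route as the paper: coercivity plus Galerkin orthogonality, substitution of the Cl\'ement interpolant $v_h=I_{\mbox{Cl}}^0 e$ (the paper leaves $v_h$ generic and only implicitly makes this choice when it invokes Lemma~\ref{Interpolationanisotrope}), elementwise integration by parts to obtain the residual representation, and then weighted Cauchy--Schwarz with the anisotropic weights $h_{\min,K}^2$ and $h_{\min,E}^2/h_E$ so that (\ref{cle1})--(\ref{cle2}) supply the factor $m_1(e,\cT_h)\lVert\nabla e\rVert_\Omega$. Your anticipated bookkeeping of the $\delta_y$ coupling is indeed the only delicate point, and the paper handles it exactly as you describe, by matching the three $\alpha_{ex}$ integrals of $a_c$ against the $\delta_y$ term of $r_K$ and the trace term of $r_E$.
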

\begin{proof}
  In order to derive (\ref{bornesup}) we utilize the orthogonality property of the error 
  $$a(u-u_h,v_h)=0 \mbox{    } \forall v_h\in V_h.$$
  Let $v_h\in V_h$. Integration by parts, triangle inequality and the weak formulation (\ref{FF}) give for all $v\in V$, 
  \begin{eqnarray*}
   a(u-u_h,v)&=&a(u-u_h,v-v_h)\\
   &=& (f^m,v^m-v_h^m)_m+(f^c,v^c-v_h^c)_c-a(u_h,v-v_h)\\
   &=&(f^m,v^m-v_h^m)_m+(f^c,v^c-v_h^c)_c-\int_{\Omega^m}\mathbb{K}\nabla u_h^m\cdot\nabla (v^m-v_h^m)dxdy\\
   &-&\int_{0}^L D\frac{du_h^c}{dx}\cdot
   \frac{d(v^c-v_h^c)}{dx} dx\\
   &-&\alpha_{ex}\int_{0}^L(u_h^m(x,0)-u_h^c(x))(v^m-v_h^m)(x,0))dx\\
   &+&\alpha_{ex}\int_{0}^L(u_h^m(x,0)-u_h^c(x))(v^c-v_h^c)(x))dx\\
   &\leqslant&\displaystyle\sum_{K\in\cT_h}\left\{\int_K|(f^m-f_K^m)(v^m-v_h^m)|\right.\\
   &+&\left.\int_K |r_K(u_h)(v^m-v_h^m)|+\displaystyle\sum_{E\in
   \cE_h(K)}\int_E|[\mathbb{K}\nabla u_h^m\cdot \textbf{n}_E]_E(v^m-v_h^m)|\right.\\
   &+&\left.\displaystyle\sum_{E\in
   \cE_h(K\cap\Omega^c)}\int_E|r_{E}(u_h)(v^c-v_h^c)|\right.\\
   &+&\left.\displaystyle\sum_{E\in
   \cE_h(K\cap\overline{\Omega^c})}\int_E |(f^c-f_{E}^c)(v^c-v_h^c)| \right\}\\
   &\leqslant&\left(\displaystyle\sum_{K\in\cT_h}h_{\min,K}^2\lVert r_K(u_h) \lVert_K^2\right)^{1/2}
   \cdot\left(\displaystyle\sum_{K\in\cT_h}h_{\min,K}^{-2}\lVert v^m-v_h^m \lVert_K^2\right)^{1/2}\\
   &+& \left(\displaystyle\sum_{E\in
   \cE_h(\Omega^m)}\frac{h_{\min,E}^2}{h_E}\lVert[\mathbb{K}\nabla u_h^m\cdot \textbf{n}_E]_E
   \lVert_E^2\right)^{1/2}\cdot
   \left(\displaystyle\sum_{E\in
   \cE_h(\Omega^m)}\frac{h_E}{h_{\min,E}^2}\lVert v^m-v_h^m\lVert_E\right)^{1/2}\\
   &+& \left(\displaystyle\sum_{E\in
   \cE_h(\Omega^c)}\frac{h_{\min,E}^2}{h_E}\lVert r_{E}(u_h)
   \lVert_E^2\right)^{1/2}\cdot
   \left(\displaystyle\sum_{E\in
   \cE_h(\Omega^c)}\frac{h_E}{h_{\min,E}^2}\lVert v^c-v_h^c\lVert_E\right)^{1/2}\\
    &+& \left(\displaystyle\sum_{K\in\cT_h} h_{\min,K}^2\cdot\lVert R_K(u_h)-r_K(u_h)\lVert_T^2\right)^{1/2}
    \cdot\left(\displaystyle\sum_{K\in\cT_h}h_{\min,K}^{-2}\lVert v^m-v_h^m \lVert_K^2\right)^{1/2}\\
    &+&\left(\displaystyle\sum_{E\in
   \cE_h(\Omega^c)}\frac{h_{\min,E}^2}{h_E}\lVert R_{E}(u_h)-r_E(u_h)
   \lVert_E^2\right)^{1/2}\cdot
   \left(\displaystyle\sum_{E\in
   \cE_h(\Omega^c)}\frac{h_E}{h_{\min,E}^2}\lVert v^c-v_h^c\lVert_E\right)^{1/2}
   \end{eqnarray*}
Every second root term is bounded by $m_1(v,\cT_h)\cdot\lVert v\lVert_h$ by means of the interpolation Lemma \ref{Interpolationanisotrope}. 
Substituting $v:=u-u_h$, then the  coercivity of $a$ (i.e. $a(u-u_h,u-u_h)\gtrsim \lVert u-u_h\lVert_h^2$) yields an upper 
bound of the error (\ref{bornesup}). 
\end{proof}
The upper error bound for non-conforming case on anisotropic meshes will derive as \cite{AHN:15} in a forthcoming paper. 
The Section \ref{conclusion} gives the 
procedure of proving this non-conforming case. Nevertheless the upper error bound for non-conforming case on isotropic meshes is 
consummate in Section \ref{conclusion}.

 %\end{enumerate}

\begin{com}\normalfont(\textbf{Upper error bound-conforming case})
 The upper error bound (\ref{bornesup}) contains an alignment measure $m_1(.,.)$. This is in contrast to estimators for isotropic
 meshes: For anisotropic discretizations, all known estimators are (explicitly or implicitly) based on an anisotropic mesh that is 
 suitably aligned with the anisotropic function. 
 Compared with the isotropic estimators, our upper error bound is special in the sense that the alignment measure cannot be 
 evaluated explicitly. However, this should not be considered too much as a disadvantage. For example, the alignment measure
  $m_1(e,.)$ for the error $e=u-u_h$ is of size $\mathcal{O}(1)$ for sufficiently good meshes \cite{ESK:04}.
  In pratical computations one may simply use the error estimator without considering the alignment measure\cite{ESK:04}. For adaptive algorithms 
  this is well justified since the lower error bound (\ref{Local}) holds unconditionally.
\end{com}
\subsection{Application to isotropic Discretization}\label{isotropic}
Since our analysis gives new results for on isotropic meshes, we here summarize them. On 
isotropic discretizations, our analysis holds with $h_{\min,K}\sim h_E\sim h_K$ for $E\in \cE(K)$ and the alignment measure 
$m_1(.,.)\sim 1$. In other words, the above results may be rephrased as follows: the residual error estimator is here given by
\begin{eqnarray}
 \Theta (u_h)&:=& \left(\displaystyle\sum_{K\in \cT_h}\Theta_K(u_h)^2\right)^{1/2},
\end{eqnarray}
with

\begin{eqnarray}\nonumber
\Theta_K^2(u_h)&:=&h_{K}^2\lVert r_K(u_h)\lVert_K^2+\displaystyle\sum_{E\in\cE_h
  (\partial K\cap \Omega^m)}h_E
  \lVert[\mathbb{K}\nabla u_h^m\cdot \textbf{n}_E]_E\lVert_E^2\\
  &+&
  \displaystyle\sum_{E\in\cE_h(\partial K\cap \overline{\Omega^c})}h_E
  \lVert r_{E}(u_h)\lVert_E^2.
 \end{eqnarray}
 for conforming discretization, and 
 \begin{eqnarray}\nonumber
  \Theta_K^2(u_h)&:=&h_{K}^2\lVert r_K(u_h)\lVert_K^2+\displaystyle\sum_{E\in\cE_h
  (\partial K\cap \Omega^m)}h_E
  \lVert[\mathbb{K}\nabla u_h^m\cdot \textbf{n}_E]_E\lVert_E^2\\
  &+&
  \displaystyle\sum_{E\in\cE_h(\partial K\cap \overline{\Omega^c})}h_E
  \lVert r_{E}(u_h)\lVert_E^2\\\nonumber
  &+&\displaystyle\sum_{E\in\cE_h(\partial K\cap \Omega^m)}h_E
  \lVert [u_h^m]_E\lVert_E^2,
 \end{eqnarray}
 for non-conforming discretization. The local and global approximation terms become: 
\begin{eqnarray*}\label{termeapproximationlocal}
 \zeta_{K}^2:=h_{K}^2 \lVert R_K(u_h)-r_K(u_h)\lVert_K^2+
 \displaystyle\sum_{E\in\cE_h(\partial K\cap \overline{\Omega^c})}h_E
 \rVert R_{E}(u_h)-r_{E}(u_h)\rVert_E^2.
\end{eqnarray*}
and
\begin{eqnarray}\label{termeapproximationglobal}
 \zeta &:=&\left(\displaystyle\sum_{K\in\mathcal{T}_h}\zeta_K^2\right)^{1/2}.
\end{eqnarray}
We recall that here, $h_K$ (resp. $h_E$) is the diameter of $K$ (resp. of $E$). With these definitions, the lower error bound (\ref{Local}) 
of Theorem \ref{bi} holds for isotropic elements $K$. On the other hand, the upper bound (\ref{bornesup}) of 
Theorem \ref{bs} reduces to
\begin{eqnarray}
    \parallel u-u_h\parallel_h&\lesssim&  
    \left[\Theta (u_h)^2+\zeta^2\right]^{1/2}.
 \end{eqnarray}

\section{Concluding remarks}\label{conclusion}
We have proposed and rigorously analysed a posteriori error estimate for the finite element approximation of a coupled continuum 
pipe-flow/Darcy model on anisotropic meshes. This model describes flow in porous media with an embedded conduit pipe. 
Our  investigations covers conforming and nonconforming discretizations, $2D$ domain as well as different kinds of standard elements. Much 
effort has been taken to impose as few assumptions as possible. For nonconforming discretizations, the main demand consists in 
Crouzeix-Raviart type elements. Different strategies are applied to estimate the lower and upper error bounds. These main results are 
summarized in Theorems \ref{bi} and \ref{bs}. In order to obtain sharp bound for reliability, the anisotropic mesh has to be properly aligned,
as it is the case with all known anisotropic (a posteriori) estimators. Here, this alignment enters explicitly via a so-called 
alignment measure. In addition, this mesh alignment is with respect to the error $e=u-u_h$. 
In contrast to upper error bound, the lower error bound (\ref{Local}) 
holds unconditionally. For isotropic discretizations, much of the analysis simplifies. The main results are presented in Section \ref{isotropic}
and the investigations seem to be novel.

However, many issues remain to be addresses in this area:\\
$\bullet$ \textbf{Upper error bound/nonconforming case.} We  give here the procedure of proving the nonconforming case for 
the upper error bound. To obtain the upper error bound for nonconforming case, Cl\'ement interpolation operator is not 
sufficient because additional term is included in the error estimator that measure the non-conformity of the method. In order to treat 
appropriately this non-conformity, we further need an estimate of the non-conforming error. Indeed, we can proved that 
(cf. \cite[Lemma 4.6]{AHN:15}),
\begin{eqnarray}\label{BNC}
 \lVert u-u_h\lVert_h&\lesssim& m_1(u-u_h,\cT_h)\cdot[\Theta(u_h)^2+\zeta^2]^{1/2}+\displaystyle\inf_{v_h\in V\cap V_h}\lVert u_h-v_h\lVert_h.
\end{eqnarray}
In this estimation, $\Theta(u_h)$ is the conformity estimator of the method given by (\ref{conformingestimator}) and the additionally term  
$\displaystyle\inf_{v_h\in V\cap V_h}\lVert u_h-v_h\lVert_h$ measures the non-conformity of the method. If the discretization is 
isotropic, the non-conformity term can be bounded by using Oswald interpolation (see \cite[Theorems 2.1 and 2.2]{karaka:03}):
\begin{eqnarray}
 \displaystyle\inf_{v_h\in V\cap V_h}\lVert u_h-v_h\lVert_h\lesssim \left[J(u_h,u_h)\right]^{1/2},
\end{eqnarray}
where $[J(u_h,u_h)]^{1/2}$ is the non-conformity estimator. Thus, we intend to proceed (in 
a forthcoming paper) as in \cite[Theorems 2.1 and 2.2]{karaka:03} (see also \cite[Theorem 3.3]{AHN:15}) while building an adapted 
anisotropic Oswald interpolation. We present also the results of numerical tests with the finite element methods.\\
$\bullet$ \textbf{CCPF model in 3D.} This work focuses on the 2D-CCPF model. In \cite[Section 4]{XW:00}, Xiaoming Wang  observes that the 
Mathematical problem with the original CCPF and Hua's CCPF is that the fluid exchanges occur on a very singular space: point singularity in the 
original CCPF case and line singularity in Hua's model. He proposes the following new CCPF model assuming the simple case of an one 
dimensional conduit centered at the x-axis and laminar flow:

\begin{eqnarray}
 \left\{
\begin{array}{ccccccccccccccc}\label{r}
 &S\frac{\partial u^m}{\partial t}-\nabla(\mathbb{K}\nabla u^m)& &=& &-\alpha_{ex}(u^m\delta_{\Gamma}-u^c\delta_{\Gamma})/|\Gamma_x|+
 R^m& &\mbox{  in  }&
   &\Omega^m&\\
   &-\frac{\partial}{\partial x}\left(D\frac{\partial u^c}{\partial x}\right)& &=& &\alpha_{ex} 
   \left(\frac{1}{|\Gamma_x|}\int_{\Gamma_x}u^m dl_x-u^c\right)+R^c& &\mbox{  in  }& & \Omega^c&,
\end{array}
\right.
\end{eqnarray}
 where $\Gamma$ is the boundary of the circular horizontal conduit centered at $x-$ axis, $\Gamma_x$ is the cross section of $\Gamma$ at $x$ 
 (a circle), $dl_x$ represents the infinitesimal increment of arc lengh on $\Gamma_x$ (equivqlent to 
 $r(x)d\theta$ in the cylindrical coordinates with $r(x)$ being the radius and $\theta$ being the angle), and $|\Gamma_x|$ is the length of 
 $\Gamma_x$ which is $\pi d(x)=2\pi r(x)$.
To treat this singularity, the anisotropic meshes are more adapted (see \cite{creuse:03}). That is why, we hope in a near further worked on this model 
proceeding similarly to \cite{Nadir:2007}.\\
$\bullet$ \textbf{Boussinesq equations.} Finally, we like to extend our results to Boussinesq equations with thermocapillarity effect on the 
surface and nonhomogeneous boundary conditions for the velocity and the temperature\cite{AP:15}.

\section{Acknowledgements}
The  author thanks African Institute for Mathematical Sciences (AIMS South Africa) for hosting him for a two months research visit.
We thank Serge Nicaise (UVHC, France) for his collaboration.  
%The research of K. W. HOUEDANOU was supported by African Institute for Mathematical Sciences (AIMS).
%\protect\bibliographystyle{abbrv}
%    \protect\bibliographystyle{alpha}
%   \bibliography{Revis.bib,Alaoui.bib,alaoui.bib}

%\begin{thebibliography}{90}

%\newcommand{\noopsort}[1]{}

%\end{thebibliography}

\end{Large}
\end{document}